\newtheorem{theorem}[equation]{Theorem}
\newtheorem{corollary}[equation]{Corollary}
\newtheorem{lemma}[equation]{Lemma}
\newtheorem{proposition}[equation]{Proposition}
\theoremstyle{definition}
\newtheorem{definition}[equation]{Definition}
\theoremstyle{remark}
\newtheorem{remark}[equation]{Remark}
\newtheorem{example}[equation]{Example}
\numberwithin{equation}{section}
\newcommand{\Ninf}{$N_{\infty}$}
\newcommand{\e}{1}
\newcommand{\D}{\mathcal{N}}
\newcommand{\CC}{\mathcal{C}}
\newcommand{\B}{\mathcal{B}}
\newcommand{\Comp}{\operatorname{Comp}}
\begin{document}

\title
{Equivariant homotopy commutativity for $G=C_{pqr}$}%
\author{Scott Balchin}%
\address{Mathematics Institute, Zeeman Building, University of Warwick, Coventry, CV4 7AL, UK}
\email{scott.balchin@warwick.ac.uk}

\author{Daniel Bearup}%
\address{University of Kent, School of Mathematics, Statistics and Actuarial Science, Sibson Building, Canterbury, CT2 7FS, UK}
\email{d.bearup@kent.ac.uk}

\author{Clelia Pech}%
\address{University of Kent, School of Mathematics, Statistics and Actuarial Science, Sibson Building, Canterbury, CT2 7FS, UK}
\email{c.m.a.pech@kent.ac.uk}

\author{Constanze Roitzheim}%
\address{University of Kent, School of Mathematics, Statistics and Actuarial Science, Sibson Building, Canterbury, CT2 7FS, UK}
\email{csrr@kent.ac.uk}

\subjclass
{}
\keywords{}

\begin{abstract}
We investigate the combinatorial data arising from the classification of equivariant homotopy commutativity for cyclic groups of order $G=C_{p_1 \cdots p_n}$ for $p_i$ distinct primes. In particular, we will prove a structural result which allows us to enumerate the number of $N_\infty$-operads for $C_{pqr}$, verifying a computational result.
\end{abstract}

\maketitle

\tableofcontents



\section{Introduction}

A (symmetric topological) operad is a sequence of spaces $\mathcal{O}(n)$ for $n\geq 0$, equipped with an action of the symmetric group $\Sigma_n$ and compatibility conditions, where $\mathcal{O}(n)$ encodes the possibilities of $n$-ary operations. An $\mathcal{O}$-algebra is then a topological space $X$ together with maps $$\mathcal{O}(n) \times_{\Sigma_n} X^n \longrightarrow X$$ plus compatibility conditions. If $\mathcal{O}(n)$ is $\Sigma_n$-contractible for each $n$, we speak of an \emph{$E_\infty$-operad.} This type of operad governs homotopy commutativity, as the contractibility implies that all the different choices of multiplying $n$ elements are homotopic. There are many different $E_\infty$-operads which all have their own technical advantages, but as the homotopy theory of operads depends on the homotopy type of the underlying spaces, all $E_\infty$-operads are equivalent in this sense, meaning that there is one notion of homotopy commutativity.

Equivariantly, this is a different story. If we move on from spaces to $G$-spaces for a finite group $G$, we do not just have $\Sigma_n$ acting on $X^n$ in the usual way but we also have to consider $G$ permuting the factors of any product indexed over $G$-sets. This $G$-action needs to be compatible with the $\Sigma_n$-action. This then leads to the notion of \emph{$N_\infty$-operads.} Unlike in the nonequivariant case, not all $N_\infty$-operads are weakly equivalent to each other.  Instead, those equivalence classes are determined by so-called \emph{transfer systems}, which are combinatorial data consisting of pairs of subgroups of $G$ satisfying some conditions. Conversely, every such transfer system also determines an $N_\infty$-operad. In particular, transfer systems can be depicted as graphs satisfying certain conditions, which we call an \emph{$N_\infty$-diagram}.

It then becomes an intriguing question to see how many different types of equivariant homotopy commutativity are possible for a finite group $G$, and how these are related. For a cyclic group of order $p^{n-1}$, an answer was given in \cite{BBR}, namely, the number of $N_\infty$-diagrams for $C_{p^{n-1}}$ is the $n^{th}$ Catalan number. Moreover, the set of $N_\infty$-diagrams for a fixed group is a lattice, which in the case of $G=C_{p^{n-1}}$ is isomorphic to the $n$-Tamari lattice (the vertex set of the $n$-associahedron). Therefore, types of equivariant homotopy commutativity have interesting links with well-studied combinatorial objects. 


Having thus covered cyclic groups of order equal to a prime power, one might be tempted to think that there would be a similarly neat answer for any finite abelian group. However, one would soon find out that this is not the case as strange ``mixed'' diagrams appear whenever one considers products of groups. For example, for $C_p$ there are two possible $N_\infty$-diagrams, for $C_{pq}$, $p \neq q$, there are ten, and we will show that for $C_{pqr}$ for distinct primes $p$, $q$ and $r$ there are 450. We will also explain why the number grows very rapidly for $C_{p_1 \cdots p_n}$ and present some structural insights into the general case. 

Below we outline the main result of the paper, which gives a structural result on the collection of $N_\infty$-operads, and suggests a more economical method to compute them.

\begin{theorem}
The set of $N_\infty$-diagrams for $G = C_{p_1 \cdots p_n}$ admits a decomposition into $(n+1)$ disjoint subsets
$$\mathcal{N}_n = \bigsqcup^n_{d=0} \operatorname{Comp}_d(G).$$
Moreover, there is an involution $\Phi_n$ on $\D_n$ such that
$$\Phi_n (\operatorname{Comp}_d(G)) = \operatorname{Comp}_{n-d}(G)$$
for any $0 \leq d \leq n$. In particular, we have
$$| \operatorname{Comp}_d(G) | = | \operatorname{Comp}_{n-d}(G) |.$$
\end{theorem}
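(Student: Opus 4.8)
The plan is to work entirely with transfer systems on the divisor lattice of $G = C_{p_1 \cdots p_n}$, which (as stated in the introduction) are in bijection with $N_\infty$-diagrams. First I would fix the combinatorial model: a transfer system is a relation on the subgroup lattice that is reflexive, transitive, and closed under conjugation and restriction; since $G$ is cyclic of squarefree order its subgroup lattice is the Boolean lattice on the $n$ primes, so a transfer system is a refinement of the divisibility partial order that is closed under the ``restriction'' (pullback) axiom. I would then introduce an integer-valued invariant $d$ attached to each such diagram — almost certainly something like the number of primes $p_i$ for which the ``generating'' relation involving $p_i$ is present (or the rank/dimension of some associated sublattice) — and define $\operatorname{Comp}_d(G)$ to be the set of diagrams with invariant equal to $d$. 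Verifying that these $n+1$ subsets are genuinely disjoint and exhaust $\mathcal{N}_n$ is immediate once the invariant is shown to be well-defined and to take values in $\{0, 1, \dots, n\}$, so I expect the decomposition half of the theorem to be essentially a definition-chasing exercise.

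The substantive content is the involution $\Phi_n$. My approach would be to build $\Phi_n$ from an order-reversing symmetry of the underlying subgroup lattice. The Boolean lattice $B_n$ of subsets of $\{p_1, \dots, p_n\}$ carries the complementation anti-automorphism $S \mapsto S^c$, and I would try to define $\Phi_n$ by transporting a transfer system $T$ along this complementation and then taking a suitable ``complement'' or ``transpose'' of the relation itself, so that a relation that is \emph{present} in $T$ becomes a relation that is \emph{forced to be absent} (or reversed) in $\Phi_n(T)$. The key design constraint is that $\Phi_n(T)$ must again satisfy all the transfer-system axioms (reflexivity, transitivity, restriction-closure); this is where most of the real work lies, because naive complementation of a transitive relation need not be transitive, and one must verify the restriction axiom survives the construction. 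I would prove $\Phi_n \circ \Phi_n = \mathrm{id}$ by checking it at the level of the lattice symmetry (complementation is an involution) and confirming that the relation-level operation is also an involution.

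Granting a well-defined involution $\Phi_n$, the compatibility $\Phi_n(\operatorname{Comp}_d(G)) = \operatorname{Comp}_{n-d}(G)$ should follow by tracking how the invariant $d$ transforms under $\Phi_n$: if $d$ counts the presence of $n$ independent pieces of data and $\Phi_n$ toggles each such piece (present $\leftrightarrow$ absent), then the image has invariant $n - d$. I would make this precise by showing $d(\Phi_n(T)) + d(T) = n$ directly from the definitions, and then the cardinality statement $|\operatorname{Comp}_d(G)| = |\operatorname{Comp}_{n-d}(G)|$ is an immediate consequence of $\Phi_n$ being a bijection that restricts to a bijection between these two blocks.

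The hard part, I expect, will be establishing that $\Phi_n$ actually lands back in the set of valid transfer systems — specifically that transitivity and the restriction (pullback) closure axiom are both preserved by the complementation-and-reversal construction. It is tempting to hope these follow formally from $G$ being squarefree cyclic (so the lattice is self-dual under complementation), but the transfer-system axioms are not symmetric under order reversal in an obvious way, and the interaction between complementing the lattice and complementing the relation is precisely where a careful case analysis, or an inductive argument on $n$ using the factorisation $C_{p_1 \cdots p_n} = C_{p_1 \cdots p_{n-1}} \times C_{p_n}$, will be needed.
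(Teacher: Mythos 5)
Your overall strategy---partition $\mathcal{N}_n$ by an integer invariant, build $\Phi_n$ from the order-reversing symmetry $H \mapsto G/H$ of the Boolean subgroup lattice together with a complementation of the relation, and verify the transfer-system axioms by induction on $n$---is the same as the paper's. But the proposal stops exactly where the content begins, and in two places what you sketch would not work as written. First, the invariant defining $\operatorname{Comp}_d(G)$ is never pinned down. The paper's definition is concrete: for a diagram $D$, let $G^0$ be the intersection of the sources of all arrows $(H \to G)$ in $D$; then $D \in \operatorname{Comp}_d(G)$ when $G^0$ has $n-d$ prime factors. (A short argument using restriction plus transitivity shows $(G^0 \to G)$ is itself an arrow of $D$, so $G^0$ is the \emph{minimal} source of an arrow into $G$.) Your candidate invariants (``number of primes whose generating relation is present,'' ``rank of some sublattice'') are left unspecified, so nothing downstream can actually be checked.

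Second---and this is the essential missing idea---you correctly observe that naive complementation of the relation destroys transitivity, but you never say what replaces it, and that replacement is the heart of the construction. In the paper, $\Phi_n$ acts as complementation (after the relabeling $H \mapsto G/H$) \emph{only on cube edges}; on diagonals it is defined by induction on dimension, applying $\Phi$ facet-wise, with the additional rule that the big diagonal $(1 \to G)$ lies in $\Phi_{n+1}(D)$ precisely when $D$ has \emph{no} arrow into $G$. Equivalently, $(G/K \to G/H)$ is an arrow of $\Phi_n(D)$ if and only if the induced diagram $D_H^K$ has no arrow into $K$. Without this rule there is no map to analyze, and the proof that the image satisfies restriction and transitivity is a genuine induction (the paper's Theorem), not a formality. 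Finally, your concluding step---proving $d(\Phi_n(T)) + d(T) = n$ ``directly from the definitions'' because $\Phi_n$ toggles $n$ independent bits---would fail: $d$ is not a count of independently toggleable data, since it is determined by the subgroup $G^0$. The paper proves $\Phi_n(\operatorname{Comp}_d(G)) = \operatorname{Comp}_{n-d}(G)$ by first establishing a structural proposition (every arrow of $D \in \operatorname{Comp}_d(G)$ lies either in $D^0 = D_{G^0}^G$ or in one of the $n-d$ bottom facets disjoint from $D^0$) and then running a contradiction argument on the image diagram; your outline has no substitute for this step.
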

%
%


\section{$N_\infty$-operads and $N_\infty$-diagrams}

Given a topological space $X$ equipped with a multiplication $m: X \times X \longrightarrow X$, we would like to say that its multiplication is \emph{homotopy commutative} if the diagram
\[
\xymatrix{ X \times X \ar[d]_-{\tau} \ar[rr]^-m  & & X \\
X \times X \ar[rru]_-m & &
}
\]
commutes up to homotopy, where $\tau$ is the twist map permuting the two factors. With this in place one would now have to take care of coherence, that is, the chosen homotopy between $m$ and $m \circ \tau$ needs to be compatible with multiplying three or more copies of $X$. Such coherence issues are neatly packaged in the theory of operads~\cite{loop}.

\begin{definition}
A (topological) \emph{symmetric operad} is a collection $\mathcal{O}=\{\mathcal{O}(n)\}_{n \ge 0}$ of topological spaces $\mathcal{O}(n)$ equipped with a (right) $\Sigma_n$-action together with maps
\[
\mathcal{O}(n) \times \mathcal{O}(i_1) \times \cdots \times \mathcal{O}(i_n) \longrightarrow \mathcal{O}(i_1 + \cdots + i_n)
\]
such that the expected coherence diagrams hold with regards to associativity, unitality and the symmetric group actions.
\end{definition}

An \emph{algebra over an operad} $\mathcal{O}$ is a space $X$ together with multiplication maps
\[
\mathcal{O}(n) \times_{\Sigma_n} X^n \longrightarrow X
\]
satisfying the expected coherence diagrams. Here, $\Sigma_n$ acts on $X^n$ by permuting factors. This information is equivalent to a morphism of symmetric operads
\[
\mathcal{O} \longrightarrow \operatorname{End}(X),
\]
where $\operatorname{End}(X)$ denotes the endomorphism operad of $X$, i.e., 
\[
\operatorname{End}(X)(n)=\operatorname{Hom}(X^n, X).
\]
We can think of the space $\mathcal{O}(n)$ as the different possibilities of multiplying $n$ elements in our space. For instance, if $\mathcal{O}(n)=\ast$ for all $n$, then there is a unique way of multiplying $n$ elements
\[
\ast \times_{\Sigma_n} X^n \cong X^n/\Sigma_n \longrightarrow X.
\]
In particular, this means that our space $X$ is a strictly commutative object. 

If instead we suppose that $\mathcal{O}(n)\simeq \ast$ for all $n$, then there is one way of multiplying $n$ elements ``up to homotopy'', which leads to the following definition.

\begin{definition}
An \emph{$E_\infty$-operad} is a symmetric operad $\mathcal{O}$ such that the action of $\Sigma_n$ on each space is free, and every $\mathcal{O}(n)$ is $\Sigma_n$-equivariantly contractible. 
\end{definition}

There are many different $E_\infty$-operads, each of them having their own technical advantages and disadvantages. Thankfully, all $E_\infty$-operads are weakly equivalent, indeed, there is a Quillen model structure on the category of topological symmetric operads where the weak equivalences are those maps that are levelwise homotopy equivalences of spaces~\cite{BM}. In particular, we can think of this as having one unique (up to homotopy) notion of homotopy commutativity.

Now that we have outlined the theory of homotopy commutativity in the non-equivariant case, we move towards to the more complex setting of $G$-spaces for $G$ some finite group. We now need to consider multiplication maps of the form
\[
\prod\limits_{T} X \longrightarrow X
\]
where $T$ is a $G$-set with $n=|T|$ elements. The $G$-action induces a group homomorphism $G \to \Sigma_n$. This means that the $\mathcal{O}(n)$ spaces should not be thought of merely as  $\Sigma_n$-spaces, but as $(G \times \Sigma_n)$-spaces. Note that simply putting a trivial $G$-action on the $\mathcal{O}(n)$ would not allow for multiplications of the above kind for any $T$ with more than one element. 

We shall now work towards the theory of $N_\infty$-operads, which allows us to fix this issue.

\begin{definition}
A \emph{graph subgroup} $\Gamma$ of $G \times \Sigma_n$ is a subgroup such that $\Gamma \cap (1 \times \Sigma_n)$ is trivial. (Here $1$ denotes the trivial group.)
\end{definition}

Any graph subgroup is of the form
\[
\Gamma = \{ (h, \sigma(h)) \,\,| \,\, h \in H \},
\]
with $H \leq G$ and $\sigma \colon H \longrightarrow \Sigma_n$ a group homomorphism. Moreover, given a finite $H$-set $T$ with $n$ elements we obtain a graph subgroup
\[
\Gamma(T)=\{ (h, \sigma(h)) \,\,|\,\, h \in H \},
\]
where $\sigma \colon H \longrightarrow \Sigma_n$ represents the $H$-action on $T$. Conversely, we can view any graph subgroup as one of the form $\Gamma(T)$, as for 
\[
\Gamma = \{ (h, \sigma(h)) \,\,| \,\, h \in H \},
\]
we can set $T$ to be a set of $n$ elements with the $H$-action given by $\sigma$. 

%

\begin{definition}
An \emph{$N_\infty$-operad} is a symmetric operad $\mathcal{O}$ in the category of $G$-spaces (that is, a collection of $G \times \Sigma_n$-spaces $\mathcal{O}(n), n \ge 0$) satisfying the following conditions.
\begin{itemize}
\item For all $n \ge 0$, $\mathcal{O}(n)$ is $\Sigma_n$-free.
\item For every graph subgroup $\Gamma$ of $G \times \Sigma_n$, the space $\mathcal{O}(n)^\Gamma$ is either empty or contractible.
\item $\mathcal{O}(0)^G$ and $\mathcal{O}(2)^G$ are both nonempty.
\end{itemize}
\end{definition}

The last condition ensures that the operad possesses an equivariant multiplication and an equivariant `point'. 

The second point together with the operad structure implies that each $\mathcal{O}(n)$ is a classifying space for a family of subgroups which satisfy some further properties forced by operad structure. This information can be distilled into the theorem below.

\begin{theorem}
Up to weak equivalence, every $N_\infty$-operad determines and is determined by a set $X=\{ N_K^H \}$, where $K<H$ are subgroups of $G$, satisfying the following properties and their conjugacies.
\begin{itemize}
\item (Transitivity) If $N_K^H \in X$ and $N_H^L\in X$, then $N_K^L \in X$.
\item (Restriction) If $N_K^H \in X$ and $L \leq G$, then $N_{K \cap L}^{H \cap L} \in X$.
\end{itemize}
We will call such a set a \emph{transfer system} and the objects $N_K^H$ will sometimes be referred to as \emph{norm maps}.
\end{theorem}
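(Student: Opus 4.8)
The plan is to translate the homotopical data of an $N_\infty$-operad into the family data of its graph-subgroup fixed points, and then to repackage that family data combinatorially as norm maps. First I would fix $n$ and consider the collection
$$\mathcal{F}_n = \{\, \Gamma \leq G \times \Sigma_n \;:\; \Gamma \text{ a graph subgroup with } \mathcal{O}(n)^\Gamma \neq \emptyset \,\}.$$
Since $\mathcal{O}(n)$ is a $(G \times \Sigma_n)$-space, $\mathcal{F}_n$ is closed under conjugation, and since $\Gamma' \leq \Gamma$ gives $\mathcal{O}(n)^{\Gamma} \subseteq \mathcal{O}(n)^{\Gamma'}$, it is closed under passage to (graph) subgroups; thus $\mathcal{F}_n$ is a family. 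By $\Sigma_n$-freeness the fixed points $\mathcal{O}(n)^\Gamma$ are empty on every non-graph subgroup, while on graph subgroups they are empty or contractible by hypothesis. Hence $\mathcal{O}(n)$ has exactly the fixed-point behaviour characterising a universal space $E\mathcal{F}_n$ up to $(G\times\Sigma_n)$-weak equivalence, via the homotopical form of Elmendorf's theorem. Consequently the weak equivalence type of $\mathcal{O}(n)$, and so of the whole operad, is determined by the sequence of families $(\mathcal{F}_n)_n$.

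Next I would reduce from families of graph subgroups to pairs of subgroups. Writing each graph subgroup as $\Gamma(T)$ for a finite $H$-set $T$ with $|T|=n$, call $T$ \emph{admissible} when $\Gamma(T) \in \mathcal{F}_n$. The operad composition maps force the admissible sets to be closed under finite coproducts and under taking sub-$H$-sets; combined with conjugation-invariance and orbit decomposition, this means the admissible sets are determined by which transitive $H$-sets $H/K$ are admissible. I would therefore declare $N_K^H \in X$ precisely when $H/K$ is admissible (with $K < H$, the trivial orbits $H/H$ being automatically admissible and supplying the reflexive identity norms). This yields a bijection between the family data $(\mathcal{F}_n)_n$ and sets $X$ of norm maps.

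It then remains to extract the two closure axioms from the operad structure. Transitivity corresponds to the identity of induced sets $L \times_H (H/K) \cong L/K$ for $K < H < L$: operadic composition lets one substitute an $H/K$-indexed product into an $L/H$-indexed product, so admissibility of $H/K$ and $L/H$ forces admissibility of $L/K$, i.e. $N_K^H, N_H^L \in X \Rightarrow N_K^L \in X$. Restriction follows from $\mathcal{F}_n$ being subgroup-closed: restricting the admissible $H$-set $H/K$ along $H \cap L \hookrightarrow H$ decomposes into $(H\cap L)$-orbits, the relevant summand being $(H\cap L)/(K\cap L)$, whose graph subgroup is subconjugate to $\Gamma(H/K)$ and hence again admissible, giving $N_{K\cap L}^{H\cap L} \in X$. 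Verifying that these are the \emph{only} constraints imposed on the $\mathcal{F}_n$ is the bookkeeping completing the forward direction.

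The main obstacle is the converse realization statement: that every set $X$ satisfying transitivity and restriction genuinely arises as the transfer system of some $N_\infty$-operad. This cannot be seen by a direct fixed-point argument, since one must produce an actual operad whose composition and unit maps are coherent and whose graph-fixed points realize exactly the prescribed families. I would handle this by building the corresponding $G$-indexing (equivalently, categorical coefficient) system from $X$ and invoking the established existence results that produce an $N_\infty$-operad from such combinatorial data. This is where the substantive homotopy-theoretic input lies, and it is the step I expect to require the most care.
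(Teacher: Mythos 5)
Your proposal is correct and follows essentially the same route as the paper: the paper does not prove this theorem directly but attributes the forward direction (an operad determines admissibility data via its graph-subgroup fixed-point families) to Blumberg--Hill, the realization of arbitrary such combinatorial data by an actual operad to Rubin, Gutierrez--White and Bonventre--Pereira, and the translation of indexing systems into transfer systems to Barnes--Balchin--Roitzheim. Your sketch reconstructs exactly this chain --- the Elmendorf-style family analysis plus extraction of transitivity and restriction for the forward direction, and an appeal to the same established existence results for the converse, which you rightly flag as the substantive input --- so it matches the paper's citation-based argument.
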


Blumberg and Hill showed that every operad determines an ``indexing system''~\cite{BH}. Rubin~\cite{Rubin}, Gutierrez-White~\cite{GW} and Bonventre-Pereira~\cite{BP} independently showed that for every such indexing system one can construct a corresponding operad. Barnes-Balchin-Roitzheim~\cite{BBR} showed that indexing systems are equivalent to the transfer systems given in the above version of this theorem.

\begin{corollary}
There are as many homotopy types of $N_\infty$-operads for a fixed finite group $G$ as there are transfer systems for $G$. \qed
\end{corollary}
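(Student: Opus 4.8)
The plan is to read the corollary directly off the preceding theorem by promoting its ``determines and is determined by'' clause to an honest bijection of sets, and then to count both sides. First I would make explicit the assignment $\Psi$ sending an $N_\infty$-operad $\mathcal{O}$ to its associated collection of norm maps $X(\mathcal{O}) = \{ N_K^H : \mathcal{O}(n)^{\Gamma} \text{ is nonempty for the relevant graph subgroup } \Gamma \}$; by the theorem this set is always a transfer system. The crucial point to verify is that $\Psi$ is a homotopy invariant, i.e.\ that it factors through weak-equivalence classes. This holds because a weak equivalence of $N_\infty$-operads is, by definition, a levelwise $(G \times \Sigma_n)$-equivariant weak equivalence, and such a map induces weak equivalences on all fixed-point spaces $\mathcal{O}(n)^{\Gamma}$; in particular it preserves which of these spaces are nonempty (equivalently, contractible rather than empty), hence preserves membership in $X$. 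Thus $\Psi$ descends to a well-defined map
$$\overline{\Psi} : \{\text{homotopy types of } N_\infty\text{-operads}\} \longrightarrow \{\text{transfer systems for } G\}.$$

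Next I would establish that $\overline{\Psi}$ is a bijection, which is exactly the content already asserted by the theorem. Injectivity is the ``is determined by'' half: if $\overline{\Psi}(\mathcal{O}) = \overline{\Psi}(\mathcal{O}')$, then the two operads have the same transfer system, and the theorem guarantees they are weakly equivalent, so they name the same homotopy type. Surjectivity is the realization statement recorded in the discussion following the theorem: combining the constructions of Rubin, Gutierrez-White and Bonventre-Pereira with the Barnes-Balchin-Roitzheim identification of indexing systems with transfer systems, every transfer system arises as $\overline{\Psi}(\mathcal{O})$ for some $N_\infty$-operad $\mathcal{O}$. A bijection of sets yields an equality of cardinalities, which is precisely the claim that there are as many homotopy types of $N_\infty$-operads as there are transfer systems for $G$.

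Since the corollary is essentially a repackaging of the theorem, I do not expect a genuine obstacle. The only step requiring a moment's care is the well-definedness of $\overline{\Psi}$ on weak-equivalence classes, namely that the transfer system is a genuine homotopy invariant of the operad rather than an artifact of a particular point-set model; this is what justifies counting \emph{homotopy types} rather than operads on the nose. Everything else is bookkeeping of a bijection already supplied by the theorem and the cited realization results.
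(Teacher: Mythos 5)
Your proposal is correct and takes essentially the same route as the paper: the paper states this corollary with a terminal QED symbol and no written argument, treating it exactly as you do, namely as an immediate repackaging of the preceding theorem's ``determines and is determined by'' clause into a bijection between weak-equivalence classes of $N_\infty$-operads and transfer systems, with surjectivity supplied by the cited realization results. Your additional check that the transfer system is invariant under weak equivalence is a reasonable elaboration of what the theorem's phrase ``up to weak equivalence'' already encodes, not a different approach.
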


In particular, there can be only finitely many $N_\infty$-operads for a finite group $G$, and as such, it makes sense to count them. We will denote by $N_\infty(G)$ the set of all $N_\infty$-operads on $G$. For $G=C_{p^n}$, the number of $N_\infty$-operads plus some additional structure has been determined in~\cite{BBR}. 

Before continuing, let us assess the first non-trivial case.  We will choose to display indexing systems as graphs whose vertices are the subgroups of $G$, and there is an edge $H \to K$ if $N_H^K \in X$. 

\begin{example}
Let $G=C_p$ for some prime $p$, then there are two $N_\infty$-operads which have the following graph representations.
\begin{figure}[h!]
 \begin{tikzpicture}[->, node distance=2cm, auto]
\node (1) at (6.000000,0) {$C_{p^1}$};
\node (2) at (4.000000,0) {$C_{p^0}$};
  \node at (3.5,0) {\Huge{(}};
\node at (6.5,0) {\Huge{)}};
\node (11) at (10.000000,0) {$C_{p^1}$};
\node (22) at (8.000000,0) {$C_{p^0}$};
  \node at (7.5,0) {\Huge{(}};
\node at (10.5,0) {\Huge{)}};
\draw (22) to (11);
 \end{tikzpicture}
\end{figure}
\end{example}

The key ingredient in the result of~\cite{BBR} is an operation
\[
\odot \colon N_\infty(C_{p^i}) \times N_\infty(C_{p^j}) \to N_\infty(C_{p^{i+j+2}}).
\]

In particular it is proved that every $N_\infty$-operad for $C_{p^{i+j+2}}$ is of the form $X \odot Y$ for $X \in N_\infty(C_{p^i})$ and $Y \in N_\infty(C_{p^j})$. This then allows an inductive strategy of proof of the main result.

\begin{theorem}[{\cite[Theorem 1]{BBR}}]
For $n \geq 1$ we have
\[
|N_\infty(C_{p^{n}})| = \mathsf{Cat}(n+1)
\]
where $\mathsf{Cat}(n)$ is the $n^{th}$ Catalan number.
\end{theorem}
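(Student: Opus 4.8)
The plan is to argue by strong induction on $n$, turning the gluing operation $\odot$ into a recursion that I can match term-by-term with the Catalan numbers. Write $a_n := |N_\infty(C_{p^n})|$; the target identity is $a_n = \mathsf{Cat}(n+1)$. I would first ground the two smallest cases directly: the trivial group $C_{p^0}$ carries a single transfer system, so $a_0 = 1 = \mathsf{Cat}(1)$, and the Example above gives $a_1 = 2 = \mathsf{Cat}(2)$. The engine of the proof is the standard convolution recursion $\mathsf{Cat}(m+1) = \sum_{p+q=m}\mathsf{Cat}(p)\,\mathsf{Cat}(q)$ with $\mathsf{Cat}(0)=1$; the goal is to produce exactly this recursion for $(a_n)$.

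The key geometric step is to decompose a transfer system $X$ on the subgroup chain $C_{p^0} < C_{p^1} < \cdots < C_{p^n}$ according to its bottom subgroup. Let $m = m(X)$ be the largest index with $N_{C_{p^0}}^{C_{p^m}} \in X$ (so $m=0$ if the bottom subgroup supports no nontrivial norm). I would show that transitivity and restriction force the chain to split at $m$: no norm crosses from an index $\le m$ to an index $> m$, and consequently $X$ is equivalent to an independent pair of transfer systems, one on $\{C_{p^1}, \dots, C_{p^m}\}$ and one on $\{C_{p^{m+1}}, \dots, C_{p^n}\}$, together with the prescribed norms from the bottom up to level $m$. This is precisely the data packaged by $\odot$, and the results of~\cite{BBR} quoted above amount to the assertion that
$$\odot \colon \bigsqcup_{\substack{i+j=n-2 \\ i,j \ge -1}} N_\infty(C_{p^i}) \times N_\infty(C_{p^j}) \To N_\infty(C_{p^n})$$
is a bijection, with the convention that $N_\infty(C_{p^{-1}})$ is a one-point set recording an empty block (the degenerate splits $m=0$ and $m=n$). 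Taking cardinalities would then give
$$a_n = \sum_{\substack{i+j=n-2 \\ i,j\ge -1}} a_i\,a_j, \qquad a_{-1} := 1.$$

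To close the induction I would reindex by $g_m := a_{m-1}$ for $m \ge 0$, so $g_0 = a_{-1} = 1$; substituting $i = p-1$, $j = q-1$ converts the displayed recursion into $g_{n+1} = \sum_{p+q=n} g_p\,g_q$, which is the Catalan convolution with the matching initial value. Hence $g_m = \mathsf{Cat}(m)$ for all $m$, and $a_n = g_{n+1} = \mathsf{Cat}(n+1)$. (Equivalently, the generating function $A(x) = \sum_{n\ge0} a_n x^n$ satisfies $A = (1+xA)^2$, the functional equation characterising $\sum_{n\ge0}\mathsf{Cat}(n+1)\,x^n$.)

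I expect the main obstacle to be the bijectivity of $\odot$ rather than the final bookkeeping: one must verify that the two blocks are genuinely independent --- that neither restriction nor transitivity forces a norm across the cut at $m$ or imposes any relation linking the two sides --- and that the degenerate splits are each counted exactly once. This is the delicate point, because the naive version in which $i,j$ range only over nonnegative integers gives the convolution $\sum_{i+j=n-2,\,i,j\ge0} a_i a_j$, which already fails at $n=2$ (it returns $1$ in place of the correct $a_2 = 5$). Getting the empty block $N_\infty(C_{p^{-1}})$ right --- equivalently, treating correctly the two boundary cases in which the bottom subgroup norms either to nothing or to all of $G$ --- is exactly what upgrades the count to the genuine Catalan recursion.
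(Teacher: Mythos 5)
Your proposal is correct and is essentially the same strategy as the paper's own (cited) argument from \cite{BBR}: splitting a transfer system at the largest subgroup receiving a norm from the trivial group is exactly the decomposition underlying the operation $\odot$ (up to the bottom/top duality realised by the involution $\Phi_n$ constructed in this paper), and your bookkeeping with the empty block $N_\infty(C_{p^{-1}})$ correctly upgrades the resulting convolution to the genuine Catalan recursion. The splitting claim you defer does hold by exactly the means you indicate --- restriction truncates $N_1^{C_{p^m}}$ to $N_1^{C_{p^a}}$ for all $a \le m$, and then any crossing norm $N_{C_{p^a}}^{C_{p^b}}$ with $a \le m < b$ would compose with $N_1^{C_{p^a}}$ to give $N_1^{C_{p^b}}$, contradicting maximality of $m$ --- so there is no gap.
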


The result goes a bit further than just an enumeration result. We can put a partial order on the set of all $N_\infty$-diagrams for a fixed group by saying that one $N_\infty$-operad $X$ is smaller than another $N_\infty$-operad $Y$ if it is a subset of $Y$. On the other side, there is a wealth of objects enumerated by Catalan numbers. One of them is the set of rooted binary trees. We can put a partial order on the set of rooted binary trees with $n$ leaves by saying that one tree is larger than another if it can be obtained from the latter by rotating a branch to the right. Balchin-Barnes-Roitzheim found that, indeed, $N_\infty$-diagrams for $G=C_{p^n}$ and rooted binary trees with $n+2$ leaves are isomorphic as posets~\cite{BBR}. However, we do not wish to elaborate on this result here.

The goal of this paper is to study the set $N_\infty(G)$ for $G$ a group of the form $C_{p_1 \cdots p_n}$ for $p_i$ distinct primes where the situation is somewhat more complicated. Note that, in particular, the subgroup lattice is an $n$-dimensional cube. 

\section{Classifying \Ninf-operads for $G=C_{p_1p_2}$}\label{s:2d}

In this section we explore the structure of $N_\infty$-operads for $G=C_{p_1p_2}$ as it will illuminate the theory that we present in the rest of the paper. Specifically, it is the only non-trivial case where one can visualise the entire situation. One can check that there are ten such $N_\infty$-operads as follows.

\begin{figure}[h!]
 \begin{tikzpicture}[->, node distance=2cm, auto,scale = 0.6]
\node (11) at (2.000000,0) {$C_{p_1}$};
\node (p1) at (0.000000,0) {$\e$};
\node (q1) at (2.000000,-2.0) {$C_{p_1p_2}$};
\node (pq1) at (0.000000,-2.0) {$C_{p_2}$};

\node (12) at (7.000000,0) {$C_{p_1}$};
\node (p2) at (5.000000,0) {$\e$};
\node (q2) at (7.000000,-2.0) {$C_{p_1p_2}$};
\node (pq2) at (5.000000,-2.0) {$C_{p_2}$};
\draw (p2) to (pq2);

\node (13) at (12.000000,0) {$C_{p_1}$};
\node (p3) at (10.000000,0) {$\e$};
\node (q3) at (12.000000,-2.0) {$C_{p_1p_2}$};
\node (pq3) at (10.000000,-2.0) {$C_{p_2}$};
\draw (p3) to (13);

\node (14) at (17.000000,-0) {$C_{p_1}$};
\node (p4) at (15.000000,-0) {$\e$};
\node (q4) at (17.000000,-2.0) {$C_{p_1p_2}$};
\node (pq4) at (15.000000,-2.0) {$C_{p_2}$};
\draw (p4) to (pq4);
\draw (p4) to (14);

\node (15) at (22.000000,-0) {$C_{p_1}$};
\node (p5) at (20.000000,-0) {$\e$};
\node (q5) at (22.000000,-2) {$C_{p_1p_2}$};
\node (pq5) at (20.000000,-2) {$C_{p_2}$};
\draw (15) to (q5);
\draw (p5) to (pq5);

\node (16) at (2.000000,-5) {$C_{p_1}$};
\node (p6) at (0.000000,-5) {$\e$};
\node (q6) at (2.000000,-7) {$C_{p_1p_2}$};
\node (pq6) at (0.000000,-7) {$C_{p_2}$};
\draw (p6) to (16);
\draw (p6) to (q6);
\draw (p6) to (pq6);
\draw (pq6) to (q6);
\draw (16) to (q6);

\node (17) at (7.000000,-5) {$C_{p_1}$};
\node (p7) at (5.000000,-5) {$\e$};
\node (q7) at (7.000000,-7.0) {$C_{p_1p_2}$};
\node (pq7) at (5.000000,-7.0) {$C_{p_2}$};
\draw (p7) to (q7);
\draw (p7) to (pq7);
\draw (pq7) to (q7);
\draw (p7) to (17);

\node (18) at (12.000000,-5) {$C_{p_1}$};
\node (p8) at (10.000000,-5) {$\e$};
\node (q8) at (12.000000,-7.0) {$C_{p_1p_2}$};
\node (pq8) at (10.000000,-7.0) {$C_{p_2}$};
\draw (p8) to (18);
\draw (p8) to (q8);
\draw (18) to (q8);
\draw (p8) to (pq8);

\node (19) at (17.000000,-5) {$C_{p_1}$};
\node (p9) at (15.000000,-5) {$\e$};
\node (q9) at (17.000000,-7) {$C_{p_1p_2}$};
\node (pq9) at (15.000000,-7) {$C_{p_2}$};
\draw (p9) to (19);
\draw (p9) to (q9);
\draw (p9) to (pq9);

\node (110) at (22.000000,-5) {$C_{p_1}$};
\node (p10) at (20.000000,-5) {$\e$};
\node (q10) at (22.000000,-7) {$C_{p_1p_2}$};
\node (pq10) at (20.000000,-7) {$C_{p_2}$};
\draw (p10) to (110);
\draw (pq10) to (q10);
\end{tikzpicture}
\caption{The ten possible $N_\infty$-operad structures for $G=C_{p_1p_2}$.}
\label{fig-nops-cpq}
\end{figure}
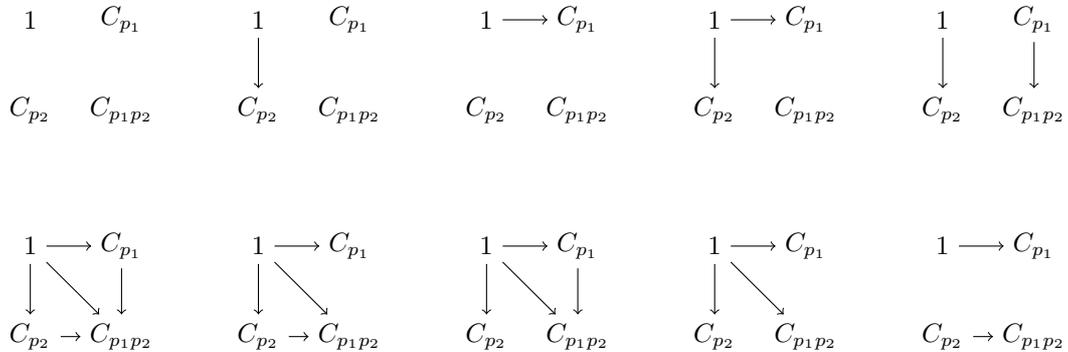

Note that there is an odd one out in these diagrams, namely the following diagram, which has a diagonal which is not forced by the restriction rule. It is this type of $N_\infty$-operad, which we call a ``mixed diagram'', that causes the complexity in this problem.

\begin{figure}[h!]
 \begin{tikzpicture}[->, node distance=2cm, auto,scale = 0.75]
\node (11) at (2.000000,0) {$C_{p_1}$};
\node (p1) at (0.000000,0) {$\e$};
\node (q1) at (2.000000,-2.0) {$C_{p_1p_2}$};
\node (pq1) at (0.000000,-2.0) {$C_{p_2}$};
\draw (p1) to (11);
\draw (p1) to (q1);
\draw (p1) to (pq1);
\end{tikzpicture}
\caption{The \emph{mixed} $N_\infty$-operad for $G=C_{p_1 p_2}$.}
\end{figure}
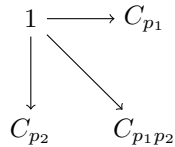

Now, we will explore the structure of this collection of ten operads, which we denote by $\mathcal{N}_2$. Let us consider three subsets of $\mathcal{N}_2$. First, denote by $\operatorname{Comp}_0(C_{p_1p_2})$ those $N_\infty$-operads which do not contain the norm map $N_1^{C_{p_1 p_2}}$, i.e. the diagonals in Fig. \ref{fig-nops-cpq}.

\begin{figure}[ht!]
\begin{tikzpicture}[->, node distance=2cm, auto,scale = 0.6]
\node (16) at (2.000000,-5) {$C_{p_1}$};
\node (p6) at (0.000000,-5) {$\e$};
\node (q6) at (2.000000,-7) {$C_{p_1p_2}$};
\node (pq6) at (0.000000,-7) {$C_{p_2}$};
\draw (p6) to (16);
\draw (p6) to (q6);
\draw (p6) to (pq6);
\draw (pq6) to (q6);
\draw (16) to (q6);

\node (17) at (7.000000,-5) {$C_{p_1}$};
\node (p7) at (5.000000,-5) {$\e$};
\node (q7) at (7.000000,-7.0) {$C_{p_1p_2}$};
\node (pq7) at (5.000000,-7.0) {$C_{p_2}$};
\draw (p7) to (q7);
\draw (p7) to (pq7);
\draw (pq7) to (q7);
\draw (p7) to (17);

\node (18) at (12.000000,-5) {$C_{p_1}$};
\node (p8) at (10.000000,-5) {$\e$};
\node (q8) at (12.000000,-7.0) {$C_{p_1p_2}$};
\node (pq8) at (10.000000,-7.0) {$C_{p_2}$};
\draw (p8) to (18);
\draw (p8) to (q8);
\draw (18) to (q8);
\draw (p8) to (pq8);

\node (19) at (17.000000,-5) {$C_{p_1}$};
\node (p9) at (15.000000,-5) {$\e$};
\node (q9) at (17.000000,-7) {$C_{p_1p_2}$};
\node (pq9) at (15.000000,-7) {$C_{p_2}$};
\draw (p9) to (19);
\draw (p9) to (q9);
\draw (p9) to (pq9);
\end{tikzpicture}
\caption{The collection $\operatorname{Comp}_0(C_{p_1p_2})$.}
\end{figure}

Next, we consider the collection $\operatorname{Comp}_2(C_{p_1p_2})$ of those $N_\infty$-operads which do not contain the norm maps $N_{H}^{C_{p_1 p_2}}$ for $H$ any of the two proper subgroups of $C_{p_1 p_2}$.

\begin{figure}[ht!]
\begin{tikzpicture}[->, node distance=2cm, auto,scale = 0.6]
\node (11) at (2.000000,0) {$C_{p_1}$};
\node (p1) at (0.000000,0) {$\e$};
\node (q1) at (2.000000,-2.0) {$C_{p_1p_2}$};
\node (pq1) at (0.000000,-2.0) {$C_{p_2}$};

\node (12) at (7.000000,0) {$C_{p_1}$};
\node (p2) at (5.000000,0) {$\e$};
\node (q2) at (7.000000,-2.0) {$C_{p_1p_2}$};
\node (pq2) at (5.000000,-2.0) {$C_{p_2}$};
\draw (p2) to (pq2);

\node (13) at (12.000000,0) {$C_{p_1}$};
\node (p3) at (10.000000,0) {$\e$};
\node (q3) at (12.000000,-2.0) {$C_{p_1p_2}$};
\node (pq3) at (10.000000,-2.0) {$C_{p_2}$};
\draw (p3) to (13);

\node (14) at (17.000000,-0) {$C_{p_1}$};
\node (p4) at (15.000000,-0) {$\e$};
\node (q4) at (17.000000,-2.0) {$C_{p_1p_2}$};
\node (pq4) at (15.000000,-2.0) {$C_{p_2}$};
\draw (p4) to (pq4);
\draw (p4) to (14);
\end{tikzpicture}
\caption{The collection $\operatorname{Comp}_2(C_{p_1p_2})$.}
\end{figure}

We then define $\operatorname{Comp}_1(C_{p_1p_2})$ to consist of those $N_\infty$-operads which have the norm map  $N_{C_{p_1}}^{C_{p_1 p_2}}$ or $N_{C_{p_2}}^{C_{p_1 p_2}}$ but not $N_{e}^{C_{p_1 p_2}}$. 

\begin{figure}[ht!]
\begin{tikzpicture}[->, node distance=2cm, auto,scale = 0.6]
\node (17) at (7.000000,-5) {$C_{p_1}$};
\node (p7) at (5.000000,-5) {$\e$};
\node (q7) at (7.000000,-7.0) {$C_{p_1p_2}$};
\node (pq7) at (5.000000,-7.0) {$C_{p_2}$};
\draw(p7) to (pq7);
\draw(17) to (q7);

\node (18) at (12.000000,-5) {$C_{p_1}$};
\node (p8) at (10.000000,-5) {$\e$};
\node (q8) at (12.000000,-7.0) {$C_{p_1p_2}$};
\node (pq8) at (10.000000,-7.0) {$C_{p_2}$};
\draw(p8) to (18);
\draw(pq8) to (q8);
\end{tikzpicture}
\caption{The collection $\operatorname{Comp}_1(C_{p_1p_2})$.}
\end{figure}

We can see, purely by inspection, that these three ($=2+1$) subsets form a partition of $\mathcal{N}_2$. This will be the first part of the general strategy for understanding $\mathcal{N}_n$. We will prove that we can partition the set $\mathcal{N}_n$ into $(n+1)$ disjoint subsets.

However, there is still some extra structure on the collection $\{ \operatorname{Comp}_i(C_{p_1p_2}) \}_i$, which we will now investigate. Indeed, it is no coincidence that $|\operatorname{Comp}_0(C_{p_1p_2})| = |\operatorname{Comp}_2(C_{p_1p_2})|$. In particular, there exists an involution $\Phi_2 \colon \mathcal{N}_2 \to \mathcal{N}_2$. Instead of giving a formal definition (which will appear in the following section) we simply illustrate it by giving the correspondence and inviting the reader to understand the relationship. We have grouped the elements in coloured blocks to distinguish the $\operatorname{Comp}_i(C_{p_1p_2})$. Note that $\Phi_2 \colon \operatorname{Comp}_i(C_{p_1p_2}) \to \operatorname{Comp}_{2-i}(C_{p_1p_2})$. Also observe that the long diagonal is affected only in the case where $i=0,2$.

\begin{figure}[h!]
 \begin{tikzpicture}[->, node distance=2cm, auto,scale = 0.6]
\draw[rounded corners, cyan, fill=cyan, opacity=0.25] (-1, -3) rectangle (18, 1) {};
\draw[rounded corners, orange, fill=orange, opacity=0.25] (-1, -8) rectangle (18,-4) {};
\draw[rounded corners, orange, fill=purple, opacity=0.25] (19, -8) rectangle (23,1) {};

\draw[<->, decorate, decoration={snake,pre length=4pt,post length=2pt},thick] (1,-2.25) -- (1,-4.75);
\draw[<->, decorate, decoration={snake,pre length=4pt,post length=2pt},thick] (6,-2.25) -- (6,-4.75);
\draw[<->, decorate, decoration={snake,pre length=4pt,post length=2pt},thick] (11,-2.25) -- (11,-4.75);
\draw[<->, decorate, decoration={snake,pre length=4pt,post length=2pt},thick] (16,-2.25) -- (16,-4.75);
\draw[<->, decorate, decoration={snake,pre length=4pt,post length=2pt},thick] (21,-2.25) -- (21,-4.75);

\node (11) at (2.000000,0) {$C_{p_1}$};
\node (p1) at (0.000000,0) {$1$};
\node (q1) at (2.000000,-2.0) {$C_{p_1p_2}$};
\node (pq1) at (0.000000,-2.0) {$C_{p_2}$};

\node (12) at (7.000000,0) {$C_{p_1}$};
\node (p2) at (5.000000,0) {$1$};
\node (q2) at (7.000000,-2.0) {$C_{p_1p_2}$};
\node (pq2) at (5.000000,-2.0) {$C_{p_2}$};
\draw (p2) to (pq2);

\node (13) at (12.000000,0) {$C_{p_1}$};
\node (p3) at (10.000000,0) {$1$};
\node (q3) at (12.000000,-2.0) {$C_{p_1p_2}$};
\node (pq3) at (10.000000,-2.0) {$C_{p_2}$};
\draw (p3) to (13);

\node (14) at (17.000000,-0) {$C_{p_1}$};
\node (p4) at (15.000000,-0) {$1$};
\node (q4) at (17.000000,-2.0) {$C_{p_1p_2}$};
\node (pq4) at (15.000000,-2.0) {$C_{p_2}$};
\draw (p4) to (pq4);
\draw (p4) to (14);

\node (15) at (22.000000,-0) {$C_{p_1}$};
\node (p5) at (20.000000,-0) {$1$};
\node (q5) at (22.000000,-2) {$C_{p_1p_2}$};
\node (pq5) at (20.000000,-2) {$C_{p_2}$};
\draw (15) to (q5);
\draw (p5) to (pq5);

\node (16) at (2.000000,-5) {$C_{p_1}$};
\node (p6) at (0.000000,-5) {$1$};
\node (q6) at (2.000000,-7) {$C_{p_1p_2}$};
\node (pq6) at (0.000000,-7) {$C_{p_2}$};
\draw (p6) to (16);
\draw (p6) to (q6);
\draw (p6) to (pq6);
\draw (pq6) to (q6);
\draw (16) to (q6);

\node (17) at (7.000000,-5) {$C_{p_1}$};
\node (p7) at (5.000000,-5) {$1$};
\node (q7) at (7.000000,-7.0) {$C_{p_1p_2}$};
\node (pq7) at (5.000000,-7.0) {$C_{p_2}$};
\draw (p7) to (q7);
\draw (p7) to (pq7);
\draw (pq7) to (q7);
\draw (p7) to (17);

\node (18) at (12.000000,-5) {$C_{p_1}$};
\node (p8) at (10.000000,-5) {$1$};
\node (q8) at (12.000000,-7.0) {$C_{p_1p_2}$};
\node (pq8) at (10.000000,-7.0) {$C_{p_2}$};
\draw (p8) to (18);
\draw (p8) to (q8);
\draw (18) to (q8);
\draw (p8) to (pq8);

\node (19) at (17.000000,-5) {$C_{p_1}$};
\node (p9) at (15.000000,-5) {$1$};
\node (q9) at (17.000000,-7) {$C_{p_1p_2}$};
\node (pq9) at (15.000000,-7) {$C_{p_2}$};
\draw (p9) to (19);
\draw (p9) to (q9);
\draw (p9) to (pq9);

\node (110) at (22.000000,-5) {$C_{p_1}$};
\node (p10) at (20.000000,-5) {$1$};
\node (q10) at (22.000000,-7) {$C_{p_1p_2}$};
\node (pq10) at (20.000000,-7) {$C_{p_2}$};
\draw (p10) to (110);
\draw (pq10) to (q10);
\end{tikzpicture}
\caption{The involution $\Phi_2 \colon \mathcal{N}_2 \to \mathcal{N}_2$.}
\end{figure}


\section{Higher dimensions}

We now introduce the general theory of $N_\infty$-diagrams for $G=C_{p_1 \cdots p_n}$, where $p_1,\dots,p_n$ are distinct primes.  We will denote by $\D_n$ the set of $N_\infty$-operads for such a $G$. The goal of this section is to prove that there is an intuitive decomposition of $\D_n$ into $(n+1)$ disjoint subsets, which, as hinted at in Section~\ref{s:2d}, admits an involution $\Phi_n \colon \D_n \to \D_n$. We then use this result in Section~\ref{s:3d} to prove that $|\D_3| = 450$.


\subsection{Decomposition}

\begin{definition}
	Let $D$ be an $N_\infty$-diagram for $G$. If $H < K$ are subgroups of $G$ we denote by $D_H^K$ the $N_\infty$-diagram induced by $D$ on the vertices corresponding to subgroups of $K$ containing $H$.
\end{definition}

The set of $N_\infty$-diagrams for $G=C_{p_1 \cdots p_n}$ admits a decomposition into $(n+1)$ disjoint subsets as follows. Let $D \in \D_n$, and consider the set of all arrows $( H \to G )$ contained in $D$. Let $G^0$ be the intersection of all the initial vertices of such arrows; clearly, the induced diagram $D^0 := D_{G^0}^G$ contains all the arrows with final vertex $G$, and it is minimal for this property. Therefore, denoting by $\Comp_d(G)$ the set of diagrams $G$ with $G^0 = C_{p_{i_1} p_{i_2} \dots p_{i_{n-d}}}$ for any $0 \leq d \leq n$, we obtain a decomposition:
\[
	\D_n = \bigsqcup_{d=0}^n \Comp_d(G).
\]

\begin{remark}
Note that if $D$ is in $\Comp_d(G)$ then $D^0$ is supported on (and contains the big diagonal of) a $d$-dimensional face. In particular we see that $\Comp_0(n)$ consists of those $N_\infty$-diagrams which do not contain a norm map to the group itself, and $\Comp_n(G)$ consists of those $N_\infty$-diagrams which contain the long diagonal $N^G_\e$.
\end{remark}


To prove the next result we need to introduce some notation for the facets (codimension one faces) of the $n$-dimensional cube. Any facet has one of the following forms:
\begin{itemize}
	\item \emph{bottom facet} $B_i$, i.e., facet containing the $\e$ vertex and a vertex of the form $C_{p_1 \dots \hat p_i \dots p_n}$ ($\hat p_i$ means removing $p_i$),
	\item \emph{top facet} $T_i$, i.e., facet containing a vertex of the form $C_{p_i}$ and the $G$ vertex.
\end{itemize}

\begin{proposition}\label{p:decomp}
	If $D \in \Comp_d(G)$, then there exist $n-d$ facets adjacent to $\e$ and not intersecting $D^0$ such that all arrows of $D$ are either arrows of $D^0$ or contained in these $n-d$ facets.
\end{proposition}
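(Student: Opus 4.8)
The plan is to recast everything in the Boolean-lattice picture, observe that the facets named in the statement are in fact canonical (so that the existence part is automatic and the real content is a covering property), isolate that property as a single claim about sources of arrows into $G$, and finally prove the claim with one restriction followed by one transitivity.

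First I would identify each subgroup of $G$ with the corresponding subset of $\{1,\dots,n\}$, so that $G^0$ corresponds to a subset $S$ with $|S| = n-d$, and $D^0$ is supported on the face $F^0 = [G^0,G]$, i.e. the subsets containing $S$. The facets adjacent to $\e$ are exactly the bottom facets $B_1,\dots,B_n$, and $B_i$ is disjoint from $F^0$ precisely when $i \in S$: if $i \in S$ then every vertex of $F^0$ contains $i$, whereas if $i \notin S$ then the vertex $S$ itself lies in $B_i \cap F^0$. Hence the facets adjacent to $\e$ not meeting $D^0$ are precisely $\{B_i : i \in S\}$, of which there are $|S| = n-d$; this settles existence, and it remains to show that these facets together with $D^0$ capture every arrow of $D$.

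Next I would reduce the covering statement to the claim that every norm map $N_H^K \in D$ with $G^0 \le K$ satisfies $G^0 \le H$. Granting this, take any $N_H^K \in D$: if $G^0 \not\le K$ then some $i \in S$ has $i \notin K$ (hence $i \notin H$), so the arrow lies in $B_i$; and if $G^0 \le K$ then the claim gives $G^0 \le H$, so both endpoints lie in $F^0$ and $N_H^K$ is an arrow of $D^0$. To prove the claim, recall first (as in the remark) that $G^0$ is the meet of the sources of all norm maps into $G$, and that this family of sources is closed under intersection: restricting $N_{L_1}^G$ by $L_2$ gives $N_{L_1 \cap L_2}^{L_2}$, which composes with $N_{L_2}^G$ to give $N_{L_1 \cap L_2}^G$; in particular the big diagonal $N_{G^0}^G$ lies in $D$. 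Now suppose $N_H^K \in D$ with $G^0 \le K$, and assume for contradiction that $G^0 \not\le H$, so $H \cap G^0 \subsetneq G^0$. Restricting $N_H^K$ by $G^0$ produces $N_{H \cap G^0}^{K \cap G^0} = N_{H \cap G^0}^{G^0}$, using $K \cap G^0 = G^0$; composing this with $N_{G^0}^G$ by transitivity yields $N_{H \cap G^0}^G \in D$. Thus $H \cap G^0$ is the source of a norm map into $G$, so by minimality $G^0 \le H \cap G^0$, contradicting $H \cap G^0 \subsetneq G^0$. The degenerate case $G^0 = G$ (that is, $d=0$) is immediate: then $F^0$ is a single point and every arrow of $D$ has target $\ne G$, hence lies in some $B_i$.

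The main obstacle, and the single genuinely clever step, is the direction of the argument: the desired conclusion $G^0 \le H$ is an \emph{upward} statement about $H$, whereas the transfer-system axioms only ever shrink groups under restriction. The resolution is to restrict down to $G^0$ (collapsing the target $K$ onto $G^0$) and then climb back up to $G$ along the big diagonal, thereby converting the problem into the minimality of $G^0$ among sources of arrows into $G$. Everything else is routine bookkeeping in the subset lattice, the only points needing care being the two degenerate extremes $d=0$ and $d=n$.
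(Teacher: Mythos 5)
Your proposal is correct and follows essentially the same route as the paper's proof: you establish that the big diagonal $N_{G^0}^{G}$ belongs to $D$ by intersecting sources of arrows into $G$ via restriction--transitivity, and then, for any arrow whose target contains $G^0$ but whose source does not, you restrict down to $G^0$ and compose with $N_{G^0}^{G}$ to contradict the minimality of $G^0$ — exactly the paper's contradiction argument, merely packaged as a standalone claim. Your explicit treatment of the degenerate cases $d=0$ and $d=n$ (where $N_{G^0}^{G}$ is not an arrow, resp.\ where $D^0=D$) is a welcome point of extra care that the paper handles only for $d=n$.
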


\begin{proof}
	First notice that if $D \in \Comp_n(G)$, then $D^0=D$ so the result is trivially true. Now assume $d < n$ and $D \in \Comp_d(G)$. Clearly, $D^0$ is then contained in the intersection of $n-d$ top facets, say $T_1,T_2,\dots,T_{n-d}$. This implies that the intersection $G^0$ of all initial vertices of arrows with final vertex $G$ is given by
	\[
		G^0=C_{p_1 p_2 \dots p_{n-d}}.
	\]
	Let us show that $D^0$ must include an arrow $(G^0 \to G)$. To do this let us enumerate the set of arrows of $D^0$ with final vertex $G$:
	\[
		(H_1 \to G), (H_2 \to G), \dots, (H_k \to G).
	\]
	Since $D$ contains the arrows $(H_1 \to G)$ and $(H_2 \to G)$, by the restriction condition it must have an arrow $(H_1 \cap H_2 \to H_1)$. But by the transitivity condition, the arrows $$(H_1 \cap H_2 \to H_1) \,\,\, \mbox{and} \,\,\, (H_1 \to G)$$ imply the existence of an arrow $(H_1 \cap H_2 \to G)$. Repeating this argument, since $G^0= \bigcap_{i=1}^k H_i,$ we deduce that $D^0$ has to contain the arrow $(G^0 \to G)$.
	
	Since $D^0$ contains an arrow with initial vertex $G^0=C_{p_1 p_2 \dots p_{n-d}}$, there are exactly $n-d$ bottom faces which do not intersect $D^0$, namely $$B_1,B_2,\dots,B_{n-d}.$$ We need to show that any arrow of $D$ is either an arrow of $D^0$, or contained in one of the bottom facets $$B_1,B_2,\dots,B_{n-d}.$$.
	
	Assume by contradiction that it is not the case. Note that an arrow is not in $D^0$ if and only if its initial vertex does not contain $G_0$, and an arrow is not in one of the bottom facets $$B_1,B_2,\dots,B_{n-d}$$ if and only if its final vertex is not in the union of subgroups $\bigcup_{i=1}^{n-d} C_{p_1 \dots \hat p_i \dots p_n}$. Therefore, $D$ must have an arrow $(K \to L)$ with $K \not\supset G^0$ and 
	\[
		L \not \subset \bigcup_{i=1}^{n-d} C_{p_1 \dots \hat p_i \dots p_n}.
	\]
	The latter condition implies that $L$ must contain the subgroup $C_{p_1 \dots p_{n-d}}=G^0$. By the restriction condition, since $D$ contains the arrow $(K \to L)$ it must also contain the arrow 
	\[
		(K \cap G^0 \to L\cap G^0=G^0).
	\]
	But we saw that $D$ contains the arrow $(G^0 \to G)$, so by transitivity $D$ must contain the arrow $$(K \cap G^0 \to G).$$ Since $K \not \supset G^0$, the subgroup $K \cap G^0$ is a strict subset of $G^0$ with an arrow to $G$, which contradicts the minimality of $G^0$. Therefore, any arrow of $D$ is either an arrow of $D^0$, or contained in one of the bottom facets $$B_1,B_2,\dots,B_{n-d},$$ which concludes the proof.
\end{proof}

\subsection{An involution of $N_\infty$-diagrams for $G=C_{p_1 \cdots p_n}$}

In this section we introduce an involution $\Phi_n : \D_n \to \D_n$, which  swaps the distinguished subsets $\Comp_0(G)$ and $\Comp_n(G)$. We construct the involution by induction on $n \geq 1$ as follows.
\begin{itemize}
	\item If $n=1$, then we let $\Phi_1$ swap the empty $N_\infty$-diagram and the full $N_\infty$-diagram.
	\item Now assume that we constructed the map $\Phi_n$ for a fixed $n \geq 1$, and consider a $N_\infty$-diagram $D$ on the $(n+1)$-dimensional cube, i.e., for $$G=C_{p_1 \cdots p_{n+1}}.$$
To define $\Phi_{n+1}$, we apply $\Phi_n$ to $D$ restricted to each facet and we reindex the vertices so that in the image a vertex $H$ is replaced with $G/H$. In particular this means that $\Phi_{n+1}$ sends a bottom facet $B_i$ to the top facet $T_i$, and vice-versa. Finally, the big diagonal $(1 \to G)$ belongs to $\Phi_{n+1}(D)$ if and only if $D \in \Comp_{n+1}$.
\end{itemize}
We claim that this construction gives a well-defined map from the set of $N_\infty$-diagrams to the set of graphs on the hypercube. Later we will prove that the image of a $N_\infty$-diagram is also a $N_\infty$-diagram.

\begin{proposition}\label{p:phi-welldefined}
For any $n \geq 1$ the map $\Phi_n$ is a well-defined map from the set $\D_n$ of $N_\infty$-diagrams on the $n$-dimensional hypercube to the set of graphs on the hypercube.
\end{proposition}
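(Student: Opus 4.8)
The plan is to argue by induction on $n$, mirroring the inductive definition of $\Phi_n$. The base case $n=1$ needs no argument: $\Phi_1$ is the explicit swap of the empty and full diagrams on the $1$-cube. For the inductive step I assume $\Phi_n$ is well defined and show that the recipe for $\Phi_{n+1}$ assigns to each $D \in \D_{n+1}$ a single unambiguous graph on the $(n+1)$-cube. Since a graph on the hypercube is just a choice, for each admissible pair $H < K$, of whether the arrow $(H \to K)$ is present, being ``well defined'' means exactly that the recipe never tries to declare one and the same arrow both present and absent.

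First I would dispatch two bookkeeping points. For every facet $F$, the restriction $D|_F$ is again an $N_\infty$-diagram: a facet is an interval sublattice (a bottom facet is $[\e, C_{p_1 \cdots \hat p_i \cdots p_n}]$ and a top facet is $[C_{p_i}, G]$), and both the restriction and transitivity axioms are inherited by intervals $[H,K]$, since $H'\cap L$ and $K'\cap L$ again lie in $[H,K]$ whenever $H', K', L$ do. Hence $\Phi_n(D|_F)$ is defined by the inductive hypothesis. Next, every arrow other than the big diagonal lies in at least one facet: an arrow $(H \to K)$ belongs to no facet exactly when $H = \e$ and $K = G$. So the recipe does assign a value to every arrow --- the big diagonal directly through the $\Comp$-condition, and each remaining arrow through the facet(s) containing it.

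The crux, and the step I expect to carry all the difficulty, is consistency on overlaps: if an arrow $\alpha$ lies in two distinct facets $F$ and $F'$, the values prescribed through $F$ and through $F'$ must coincide. Two distinct facets both containing $\alpha$ fix distinct coordinates, so they meet in a codimension-two face which still contains $\alpha$; thus it suffices to compare the two prescriptions after restricting to $F \cap F'$. Unwinding the definition of $\Phi_{n+1}$ on each facet (apply $\Phi_n$, then relabel by $H \mapsto G/H$), and using the inductive, facet-by-facet description of $\Phi_n$ itself, I would reduce the desired equality to a comparison of $\Phi_{n-1}$ evaluated on the restrictions of $D$ to the two codimension-two faces that are interchanged by the relabelling. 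At this point the hypothesis $D \in \D_{n+1}$ must enter in an essential way: the combinatorial relabelling alone does not force the two restrictions to produce the same arrow, but the norm maps forced by the restriction and transitivity axioms do. Concretely, I would show that the arrow produced through $F$ forces, via one application of restriction (an intersection) followed by one application of transitivity (a composition) --- exactly the two moves used in the proof of Proposition~\ref{p:decomp} --- the arrow produced through $F'$, and symmetrically. Establishing this equivalence is the real content of the proposition, and I expect it to be the only place where the $N_\infty$-axioms, as opposed to the purely formal bookkeeping of the relabelling, are needed.

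Finally, consistency for an arrow lying in three or more facets is immediate once the pairwise statement is known, since equality of the values attached by all facets through $\alpha$ follows from pairwise equality. Collecting these, each arrow of the $(n+1)$-cube receives one well-defined value, so $\Phi_{n+1}(D)$ is a single graph on the hypercube, which closes the induction.
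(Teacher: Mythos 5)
Your plan correctly isolates the crux---when an arrow is covered by two facets, the two prescriptions must agree---but this is exactly the step your proposal leaves unproved, and the bridge you sketch cannot be built, because the consistency statement you plan to prove is false. Unwind what the two prescriptions actually test, using the $n=2$ case (where $\Phi_2$ is the antipodal complement on edges of a square, as one reads off the ten diagrams). Take $G=C_{p_1p_2p_3}$ and the edge $\alpha=(C_{p_1p_3}\to G)$, covered via the facets $B_1=[\e,C_{p_2p_3}]$ and $B_3=[\e,C_{p_1p_2}]$ (relabelled onto $T_1$ and $T_3$). Through $B_1$, $\alpha$ corresponds to the edge $(\e\to C_{p_2})$ of the square $B_1$, so the recipe declares $\alpha\in\Phi_3(D)$ iff the antipodal edge $(C_{p_3}\to C_{p_2p_3})$ is not in $D$; through $B_3$ it declares $\alpha\in\Phi_3(D)$ iff $(C_{p_1}\to C_{p_1p_2})$ is not in $D$. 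These two tests live on the codimension-two faces interchanged by $H\mapsto G/H$, exactly as you predict, but they are \emph{parallel translates} of $\alpha$, not the same arrow, so consistency for all of $\D_3$ would force every transfer system to contain $(C_{p_3}\to C_{p_2p_3})$ if and only if it contains $(C_{p_1}\to C_{p_1p_2})$. That is false: $D=\{(\e\to C_{p_2}),\ (C_{p_3}\to C_{p_2p_3})\}$ is a genuine transfer system (every restriction of its arrows is trivial or equal to $(\e\to C_{p_2})$, which is present, and there are no composable pairs), and it contains one translate but not the other. The $N_\infty$-axioms give only one-way implications here: each translate restricts \emph{down} to $(\e\to C_{p_2})$, but no combination of restriction and transitivity ever translates an arrow \emph{upward}. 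So the ``and symmetrically'' half of your proposed equivalence is precisely the direction that fails.

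This is also where your route differs from the paper's. The paper never compares two overlapping facet prescriptions. It \emph{asserts} that on cube edges $\Phi_n$ is given by one global rule (an edge lies in $\Phi_n(D)$ iff the edge obtained by relabelling $H\mapsto G/H$ does not lie in $D$), so edges get a facet-independent value by fiat; the remaining diagonals with $H\neq\e$ or $K\neq G$ are referred to smaller cubes --- and in the $3$-cube each such diagonal lies in exactly \emph{one} relabelled facet, so no clash can occur there --- while the big diagonal has its own clause. In effect, well-definedness holds because every arrow is assigned a single rule, equivalently $(A\to B)\in\Phi_n(D)$ iff the restriction of $D$ to the antipodal face $[G/B,G/A]$ has no arrow into $G/A$, and not because competing facet prescriptions are shown to coincide; your counterexample shows they genuinely do not. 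If you want to salvage your framework, take that explicit antipodal formula as the definition (then this proposition is immediate, and the induction is only needed later to show the image is again an $N_\infty$-diagram); note that for $n\geq 4$ the same clash you would face for edges reappears for face diagonals, so attempting the overlap comparison is hopeless in general, not just in this low-dimensional case.
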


\begin{proof}
	First notice that $\Phi_n$ acts as the complement on cube edges, with vertices swapped as follows: 
	\[
	H \mapsto G/H. 
	\]
	Thus, it is well-defined on cube edges, and we only need to check that it is well-defined on all other arrows. If $n=1$ all arrows are cube edges, so there is nothing to check. Now assume $\Phi_m$ is well-defined for $m \leq n$, and consider a $N_\infty$-diagram $D$ on the $(n+1)$-dimensional hypercube. The induction hypothesis shows that $\Phi_{n+1}(D)$ is well-defined on all the diagonals of the form $(H \to K)$ when $H \neq 1$ or $K \neq G$. Indeed, those are diagonals of smaller hypercubes. So we only need to consider the instructions for definining $\Phi_n$ on the big diagonal $(1 \to G)$, which is clear.
\end{proof}

We now prove that the image of an $N_\infty$-diagram is an $N_\infty$-diagram.

\begin{theorem}
	For any $N_\infty$-diagram $D \in \D_n$ we have $\Phi_n(D) \in \D_n$.
\end{theorem}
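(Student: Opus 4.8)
My plan is to replace the recursive definition of $\Phi_n$ by a single closed formula for its output and then read the two transfer-system axioms off that formula. Write $c(H) = G/H$ for the complement on the subgroup lattice of $G$, an order-reversing involution satisfying $c(H \cap K) = c(H) \vee c(K)$. I claim that for every potential arrow $(A \to B)$ of the hypercube the following condition, which I will call $(\star)$, governs membership in $\Phi_n(D)$:
\[
(A \to B) \in \Phi_n(D) \quad\Longleftrightarrow\quad D \text{ has no arrow } (H \to c(A)) \text{ with } c(B) \leq H < c(A).
\]
The interval $[c(B), c(A))$ is nonempty exactly when $A < B$, since $c$ reverses inclusions. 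Before using $(\star)$ I would check that it is consistent with the ingredients of the recursive definition: on a cube edge it reduces to ``$(c(B) \to c(A)) \notin D$'', which is the prescribed action of $\Phi_n$ as the complement on edges; and on the long diagonal $(\e \to G)$ it reduces to ``$D$ has no proper norm map into $G$'', i.e.\ $D \in \Comp_0(G)$, which is the rule governing the big diagonal.

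The first step is to prove $(\star)$ by induction on $n$, matching it against the recursion. The base case $n = 1$ and the two extreme arrows are covered by the remarks above. A general non-diagonal arrow $(A \to B)$ lies in some facet $F$, and the recursion computes $\Phi_{n+1}(D)$ on $F$ from $\Phi_n$ applied to the restriction of $D$ to the opposite facet, reindexed by $H \mapsto G/H$. Substituting the inductive instance of $(\star)$ through this reindexing and simplifying the resulting nested complements should return exactly the condition $(\star)$ for $(A \to B)$ in the $(n+1)$-cube. I expect this to be the main obstacle: because $c$ interchanges intersections with joins, it is genuinely delicate to see that the order-reversing reindexing reproduces $(\star)$ rather than a superficially similar but ill-defined prescription, and the target $c(A)$ and the source-threshold $c(B)$ must be tracked with care.

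The second step, which is the substantive one, is to derive restriction and transitivity directly from $(\star)$, using only that $D$ is itself a transfer system. For restriction, suppose $(A \to B) \in \Phi_n(D)$ and $L \leq G$, and assume for contradiction that $(A \cap L \to B \cap L) \notin \Phi_n(D)$. By $(\star)$ there is an arrow $(H \to c(A) \vee c(L)) \in D$ with $c(B) \vee c(L) \leq H < c(A) \vee c(L)$. Restricting this $D$-arrow by the subgroup $c(A)$ yields $(H \cap c(A) \to c(A)) \in D$, and one checks $c(B) \leq H \cap c(A) < c(A)$, contradicting $(\star)$ for $(A \to B)$. Transitivity is parallel: if $(A \to B), (B \to C) \in \Phi_n(D)$ but some $(H \to c(A)) \in D$ with $c(C) \leq H < c(A)$ witnesses $(A \to C) \notin \Phi_n(D)$, then $c(B) \leq H$ contradicts $(A \to B)$, while otherwise restricting $(H \to c(A))$ by $c(B)$ gives $(H \cap c(B) \to c(B)) \in D$ with $c(C) \leq H \cap c(B) < c(B)$, contradicting $(B \to C)$. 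As $G$ is abelian the conjugacy closure is automatic, so these two axioms establish $\Phi_n(D) \in \D_n$ and close the induction.
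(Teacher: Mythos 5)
Your second step is correct, and I verified both halves: granting $(\star)$, the restriction argument (restrict the offending $D$-arrow along $c(A)$ and check $c(B) \leq H \cap c(A) < c(A)$) and the transitivity argument (case split on whether $c(B) \leq H$) are complete, and they use nothing beyond the two transfer-system axioms for $D$ and the identity $c(X \cap Y) = c(X) \vee c(Y)$. This is also a genuinely different route from the paper's: the paper proves the theorem by induction on $n$ through its recursive construction of $\Phi_n$, quoting the inductive hypothesis on proper subcubes and handling the big diagonal by a separate case analysis for each axiom, whereas your argument is induction-free and uniform in the arrow. You also adopted the convention that $(1 \to G) \in \Phi_n(D)$ if and only if $D$ has no arrow into $G$, i.e.\ $D \in \Comp_0(G)$; the paper's definition literally says ``if and only if $D \in \Comp_{n+1}$'', but that reading contradicts the paper's own picture of $\Phi_2$ and the proofs of this theorem and of Proposition~\ref{p:enum}, so yours is the intended one.

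The genuine gap is your first step, and it is worse than ``delicate'': the induction you propose cannot close, because the facet recursion you want to match $(\star)$ against does not define a map at all once $n \geq 3$. Take $G = C_{pqr}$ and $D = \{(C_r \to C_{pr}),\ (1 \to C_p)\}$, which is a valid transfer system. The edge $(C_r \to C_{pr})$ lies in the two facets $[1, C_{pr}]$ and $[C_r, G]$. Computing $\Phi_3(D)$ on $[1, C_{pr}]$ from the opposite facet $[C_q, G]$, where $D$ restricts to the empty diagram, gives the full diagram on $[1, C_{pr}]$, which contains $(C_r \to C_{pr})$. Computing $\Phi_3(D)$ on $[C_r, G]$ from the opposite facet $[1, C_{pq}]$, where $D$ restricts to $\{(1 \to C_p)\}$ with $\Phi_2$-image $\{(1 \to C_p), (1 \to C_q), (C_p \to C_{pq}), (1 \to C_{pq})\}$, gives after reindexing $\{(C_{qr} \to G), (C_{pr} \to G), (C_r \to C_{qr}), (C_r \to G)\}$, which omits $(C_r \to C_{pr})$. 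The two prescriptions conflict, so Proposition~\ref{p:phi-welldefined} fails as stated (its assertion that $\Phi_n$ acts as the global complement on cube edges is already violated by the second computation), and there is nothing consistent for your induction to match $(\star)$ against. The repair is to promote $(\star)$ from a closed form to the \emph{definition} of $\Phi_n$: it is manifestly well-defined, reproduces $\Phi_1$, $\Phi_2$, the edge rule and the big-diagonal rule, and it is exactly the property the paper's own proof of this theorem manipulates (``$D_K^G$ has no arrows to $G$, hence the image contains $(1 \to L)$'' and ``$(L \to G) \in D$ forces $(1 \to G/L) \notin \Phi_{n+1}(D)$'' are the two directions of $(\star)$ for arrows out of the trivial group). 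With that substitution your second step is a complete and correct proof; without it, step 1 is an unfillable hole, though the fault lies in the paper's construction rather than in your lattice arguments.
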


\begin{proof}
	The case $n=1$ is clear. Now assume $\Phi_m(\D_m) \subseteq \D_m$ for $m \leq n$, and consider a $N_\infty$-diagram $D$ on the $(n+1)$-dimensional hypercube. We need to prove the following two properties:
	\begin{itemize}
		\item (restriction condition) for any arrow $(H \to K)$ in $\Phi_{n+1}(D)$ and any subgroup $L$ of $G$ such that $H \cap L \neq K \cap L$, the arrow $(H \cap L \to K \cap L)$ is also in $\Phi_{n+1}(D)$;
		\item (transitivity condition) for any arrows $(H \to K)$ and $(K \to L)$ in $\Phi_{n+1}(D)$, the arrow $(H \to L)$ is also in $\Phi_{n+1}(D)$.
	\end{itemize}
	
	Let us first check that $\Phi_{n+1}$ preserves the restriction condition. Let $(H \to K)$ be any arrow which is not the big diagonal. Then, by induction on the diagram $D_H^K$, which is defined on a smaller cube, we immediately see that the restriction condition is satisfied for all arrows, except maybe the big diagonal. So we may assume that $D$ is such that the arrow $(1 \to G)$ belongs to $\Phi_{n+1}(D)$, and we need to show that the arrows $(1 \to L)$ for any subgroup $L$ of $G$ also belong to $\Phi_{n+1}(D)$. By definition of $\Phi_{n+1}(D)$ we know that $D$ has no arrow adjacent to the vertex $G$. Let $K:=G/L$, and consider the induced diagram $D_K^G$, which has no arrows to $G$ either. By the induction hypothesis it follows that its image by $\Phi$ contains the `big' diagonal $(1 \to G/K)$, that is, $(1 \to L)$. Therefore, $(1 \to L)$ belongs to $\Phi_{n+1}(D)$ as claimed.
	
	Let us now look at the transitivity condition, i.e., let us check that for any arrow $(H \to K)$ and $(K \to L)$ in $\Phi_{n+1}(D)$, the arrow $(H \to L)$ is also in $\Phi_{n+1}(D)$. By induction on a smaller cube, it is immediate to see that the transitivity condition holds when $H \neq 1$ or $L \neq G$. So we only need to prove transitivity for arrows $(1 \to K)$ and $(K \to G)$. 
	
	First, note that the restriction condition on the (smaller) diagram $D_1^K$ implies that all arrows $(1 \to H)$ are in $\Phi_{n+1}(D)$ if $H \subset K$. So let us consider a subgroup $H \neq G$ such that $H \cap K \neq H$. By the restriction condition, since the arrow $(K \to G)$ is in $\Phi_{n+1}(D)$, so is the arrow $(H \cap K \to H)$. Since $H \cap K \subset K$, the arrow $(1 \to H \cap K)$ is in $\Phi_{n+1}(D)$. By transitivity in the diagram $D_1^H$ it follows that the arrow $(1 \to H)$ is in $\Phi_{n+1}(D)$. So we have proved that $\Phi_{n+1}(D)$ contains all arrows $(1 \to H)$, except maybe if $H=G$. 
	
	Now we are left with checking that $\Phi_{n+1}(D)$ contains the big diagonal. Indeed, if it did not, then by definition of $\Phi_{n+1}$ it would mean that $D$ has an arrow $(L \to G)$ for some subgroup $L$. This would imply that $\Phi_{n+1}(D)$ has no arrow $(1 \to G/L)$, which contradicts the fact $\Phi_{n+1}(D)$ must contain all arrows $(1 \to H)$ for $H \neq G$. So $\Phi_{n+1}(D)$ contains the big diagonal, which concludes the proof that $\Phi_{n+1}$ preserves the transitivity condition.
\end{proof}

\begin{proposition}\label{p:enum}\leavevmode
	\begin{enumerate}
		\item The map $\Phi_n$ interchanges the subsets $\Comp_0(G)$ and $\Comp_n(G)$.
		\item The map $\Phi_n$ is an involution.
		\item  $\Phi_n(\Comp_d(G))=\Comp_{n-d}(G)$ for any $0 \leq d \leq n$.
	\end{enumerate}
\end{proposition}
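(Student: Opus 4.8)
The plan is to reduce statements (1) and (3) to a single invariant computation that controls the arrows of $\Phi_n(D)$ pointing at $G$, and to prove (2) by a separate induction. For a diagram $D$ write $G^0(D)$ for the intersection of the initial vertices of all arrows with final vertex $G$ (with the convention $G^0(D)=G$ if there is no such arrow), so that $D\in\Comp_d(G)$ exactly when $G^0(D)$ has $n-d$ prime factors. The central claim I would establish is
\[
G^0(\Phi_n(D)) = G/G^0(D),
\]
i.e.\ $\Phi_n$ replaces the distinguished subgroup $G^0(D)$ by its complementary subgroup. Granting this, $G^0(D)$ has $n-d$ primes precisely when $D\in\Comp_d(G)$, and then $G/G^0(D)$ has $d$ primes, so $\Phi_n(\Comp_d(G))\subseteq\Comp_{n-d}(G)$ for every $d$; taking $d=0$ and $d=n$ gives the two inclusions needed for (1), since $\Comp_0(G)$ (resp.\ $\Comp_n(G)$) consists of the diagrams with $G^0=G$ (resp.\ $G^0=1$).

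I would prove the central claim by induction on $n$, using the recursive definition of $\Phi_{n+1}$ together with Proposition~\ref{p:decomp}. The case $d=0$ is immediate from the big-diagonal rule: $D\in\Comp_0(G)$ forces $(1\to G)\in\Phi_n(D)$, whence $G^0(\Phi_n(D))=1=G/G$. For $d\geq 1$, relabel so that $G^0(D)=C_{p_1\cdots p_{n-d}}$, hence $G/G^0(D)=C_{p_{n-d+1}\cdots p_n}$. By Proposition~\ref{p:decomp} every arrow of $D$ lies either in $D^0$ (supported on the $d$-face $[G^0(D),G]$ and containing $(G^0(D)\to G)$) or in one of the $n-d$ bottom facets $B_1,\dots,B_{n-d}$; restricting $(G^0(D)\to G)$ by $C_{p_m}$ shows $(1\to C_{p_m})\in D$ for every $m>n-d$. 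I would then classify the arrows $(K\to G)$ of $\Phi_n(D)$ into three families: the cube edges into $G$, governed by the complementation rule of Proposition~\ref{p:phi-welldefined} (so $(C_{p_1\cdots\hat p_j\cdots p_n}\to G)\in\Phi_n(D)$ iff $(1\to C_{p_j})\notin D$); the big diagonal, governed by the $\Comp_0$-rule (absent here, as $D\notin\Comp_0(G)$); and the intermediate norm maps $N^G_K$ with $1<K<G$ non-maximal, which the recursive definition reduces to a $\Comp_0$-condition for the diagram induced by $D$ on a bottom facet. Matching this classification against the support description above, I would verify (a) that $(G/G^0(D)\to G)\in\Phi_n(D)$ and (b) that every arrow $(K\to G)$ of $\Phi_n(D)$ has $K\supseteq G/G^0(D)$; together (a) and (b) give $G^0(\Phi_n(D))=G/G^0(D)$.

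For the involution statement (2) I would induct on $n$ and check $\Phi_n^2=\mathrm{id}$ arrow by arrow. On cube edges $\Phi_n$ acts as complementation of edge-presence together with the antipode $H\mapsto G/H$; both operations are commuting involutions, so their composite squares to the identity. Every arrow other than the big diagonal is a diagonal of a proper face, so by the recursive definition its image under $\Phi_n$ is computed by a lower-dimensional $\Phi$, and the inductive hypothesis gives $\Phi_n^2=\mathrm{id}$ on it. Finally, for the big diagonal, $(1\to G)\in\Phi_n^2(D)$ iff $\Phi_n(D)\in\Comp_0(G)$ iff $D\in\Comp_n(G)$ by statement (1), iff $(1\to G)\in D$; so the big diagonal is fixed as well.

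With (1) and (2) in hand I would finish (3): the central claim gives $\Phi_n(\Comp_d(G))\subseteq\Comp_{n-d}(G)$ and, applied to $\Phi_n(D)$, also $\Phi_n(\Comp_{n-d}(G))\subseteq\Comp_d(G)$; since $\Phi_n$ is an involution and $\D_n=\bigsqcup_d\Comp_d(G)$, these inclusions are forced to be equalities. I expect the main obstacle to be the middle family in the classification above: the intermediate norm maps $N^G_K$ are invisible to the edge-complementation rule, and pinning down exactly which of them survive in $\Phi_n(D)$ requires unwinding the nested facet recursion and reconciling it with Proposition~\ref{p:decomp}. This is where the bulk of the bookkeeping lies, and in particular where I would take most care to confirm that the number of primes in $G^0(\Phi_n(D))$ is insensitive to the choices left open by Proposition~\ref{p:decomp}.
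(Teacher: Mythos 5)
Your proposal is correct and follows essentially the same route as the paper: your central claim $G^0(\Phi_n(D))=G/G^0(D)$ is exactly what the paper's proof of part (3) establishes (there phrased as $E^0=E^G_{G/G^0}$), using the same ingredients — Proposition~\ref{p:decomp}, edge complementation from Proposition~\ref{p:phi-welldefined}, and the recursive big-diagonal rule identifying $(K\to G)\in\Phi_n(D)$ with the absence of arrows into $G/K$ in $D_1^{G/K}$ — and your verification of the big diagonal in $\Phi_n^2$ matches the paper's proof of (2). The only difference is organizational: you prove the central claim first and let (1)--(3) follow, whereas the paper takes (1) as immediate, proves (2) from it, and then carries out the $G^0$ computation inside (3).
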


\begin{proof}\leavevmode
	\begin{enumerate}
		\item This is immediate from the construction of $\Phi_n$.
		\item Recall from the proof of Proposition~\ref{p:phi-welldefined} that $\Phi_n$ acts as the complement on cube edges, with vertices swapped as follows: $H \mapsto G/H$. Therefore $\Phi_n^2$ acts as the identity on cube edges. 
		
		We need to show that it acts the same way on diagonals. If $n=1$, this is clear. Now assume $n \geq 2$ and consider $D \in \D_n$. By induction, $\Phi^2$ is the identity on all diagonals, except maybe the big diagonal $(1 \to G)$.
		
		 Assume first $(1 \to G) \in D$, i.e $D \in \CC_n$. Then by (1) we get that $\Phi_n(D) \in \B_n$, and again that $\Phi_n^2(D) \in \CC_n$, so that $(1 \to G) \in D$. Now if $(1 \to G) \not \in D$, then $D \not\in \CC_n$. Then by (1) we get that $\Phi_n(D) \not\in \B_n$, and again that $\Phi_n^2(D) \not\in \CC_n$, so that $(1 \to G) \not \in D$. 
		\item Consider a diagram $D \in \Comp_d(G)$ for $$G=C_{p_1 \dots p_n}.$$ Without loss of generality we may assume that $G^0=C_{p_1 \dots p_{n-d}}$, and we know from Proposition~\ref{p:decomp} that arrows in $D$ are contained in the union of the facets $B_1,\dots,B_{n-d}$ and of $D^0=D_{G^0}^G$. We also know that $D$ contains the arrow $(G^0 \to G)$. Let $E:=\Phi_n(D)$. We claim that
		\[
			E^0=E_{G/G^0}^G
		\]
		and that the arrows of $E$ are either arrows of $E^0$ or contained in the bottom facets $B_{n-d+1},\dots,B_n$. This is equivalent to proving that $E$ contains the arrow $(G/G^0 \to G)$, and contains no arrow $(K \to L)$ with $L \supset G/G^0$ and $K \not\supset G/G^0$.
	
	Consider the induced diagram $D_1^{G^0}$. In this diagram, there are no arrows adjacent to $G^0$. Indeed, such an arrow would not be contained in $D^0$, nor in the union of facets $B_1,\dots,B_{n-d}$. Therefore, the image $E^0$ of $D_1^{G^0}$ by $\Phi_n$ must contain the big diagonal of $E^0$, namely, the arrow $$(G/G^0=C_{p_{n-d+1}\dots p_n} \to G).$$
	
	Now assume by contradiction that it contains an arrow $(K \to L)$ with $L \supset G/G^0$ and $K \not\supset G/G^0$. Let $M:=K \cap G/G^0$. As in the proof of Proposition~\ref{p:decomp}, we deduce that $E$ contains an arrow $(M \to G)$. Therefore, $D$ has no arrow to $G/M$. Now note that $G/M$ strictly contains $G^0$. Since $D$ contains the arrow $(G^0 \to G)$, the restriction condition implies that $D$ also contains the arrow $$(G^0 \cap G/M=G_0 \to G/M),$$ which is a contradiction. This concludes the proof. \qedhere
	\end{enumerate} 
\end{proof}

\begin{corollary}\label{cor:res}
Let $G=C_{p_1 \cdots p_n}$, then
$$
|\mathcal{N}_n| = 
\left\{
\begin{array}{lr}
\displaystyle{\sum_{i=0}^{n/2}}  2 \times |\operatorname{Comp}_i(G)| & n \text{ even},\\[15pt]
\displaystyle{\sum_{i=0}^{(n-1)/2}} 2 \times |\operatorname{Comp}_i(G)| + |\operatorname{Comp}_{(n+1)/2}(G)| & n \text{ odd}.
\end{array}
\right\}
$$
\end{corollary}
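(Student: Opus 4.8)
The plan is to derive the formula purely formally from two ingredients already in hand: the disjoint decomposition $\mathcal{N}_n = \bigsqcup_{d=0}^n \operatorname{Comp}_d(G)$ and the symmetry coming from the involution $\Phi_n$. First I would observe that disjointness of the decomposition gives at once the additive identity $|\mathcal{N}_n| = \sum_{d=0}^n |\operatorname{Comp}_d(G)|$, so that the entire content of the corollary is reduced to a statement about how this single finite sum folds up.

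The second ingredient is palindromic symmetry. Proposition~\ref{p:enum}(3) gives $|\operatorname{Comp}_d(G)| = |\operatorname{Comp}_{n-d}(G)|$ for all $0 \le d \le n$, so the sequence of cardinalities is invariant under the reflection $d \mapsto n-d$ of the index set $\{0,1,\dots,n\}$. The core of the argument is to use this reflection to rewrite $\sum_{d=0}^n |\operatorname{Comp}_d(G)|$ as a sum over the orbits of $d \mapsto n-d$: each two-element orbit $\{d, n-d\}$ contributes $2\,|\operatorname{Comp}_d(G)|$ to the total, while a one-element (fixed) orbit contributes a single copy $|\operatorname{Comp}_d(G)|$. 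Re-indexing the orbit representatives by $0 \le i \le \lfloor n/2 \rfloor$ then collapses the sum onto precisely the range of summation appearing in the statement.

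The one step requiring genuine care — and the only real obstacle — is the accounting of the fixed orbit, since this is exactly what forces the split by the parity of $n$. I would analyse whether the reflection $d \mapsto n-d$ has a fixed point on $\{0,\dots,n\}$, i.e.\ whether the equation $d = n-d$ admits a solution there; this is governed by the parity of $n$, and the resulting dichotomy is what separates the two displayed expressions for $|\mathcal{N}_n|$. After treating the two parities separately and matching the orbit count against the printed summation bounds and coefficients, I would confirm the bookkeeping on the smallest nontrivial case $n=2$ of Section~\ref{s:2d}, where the recorded cardinalities $|\operatorname{Comp}_0(G)|$, $|\operatorname{Comp}_1(G)|$, $|\operatorname{Comp}_2(G)|$ must reproduce $|\mathcal{N}_2| = 10$, thereby pinning down the exact coefficient of each term and the summation ranges in both cases.
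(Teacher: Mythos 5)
Your method --- reduce to the disjoint decomposition $|\mathcal{N}_n| = \sum_{d=0}^n |\operatorname{Comp}_d(G)|$ and then fold the sum along the orbits of the reflection $d \mapsto n-d$ using Proposition~\ref{p:enum}(3) --- is exactly the intended argument; the paper supplies no separate proof, treating the corollary as immediate from that proposition. However, if you carry the orbit bookkeeping through honestly, you will find it proves a \emph{corrected} version of the displayed statement, not the statement as printed. The reflection has a fixed point on $\{0,\dots,n\}$ precisely when $n$ is \emph{even} (namely $d = n/2$), so the lone unpaired term belongs to the even case, and every orbit has size two when $n$ is odd. The correct formulas are therefore $|\mathcal{N}_n| = \sum_{i=0}^{n/2-1} 2\,|\operatorname{Comp}_i(G)| + |\operatorname{Comp}_{n/2}(G)|$ for $n$ even, and $|\mathcal{N}_n| = \sum_{i=0}^{(n-1)/2} 2\,|\operatorname{Comp}_i(G)|$ for $n$ odd. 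As printed, the even-case sum runs up to $i = n/2$ and so counts the self-paired middle term with coefficient $2$ instead of $1$, while the odd-case formula appends an extra $|\operatorname{Comp}_{(n+1)/2}(G)|$ even though the orbit $\{(n-1)/2,\,(n+1)/2\}$ is already fully accounted for by the term $2\,|\operatorname{Comp}_{(n-1)/2}(G)|$ inside the sum. Your own proposed sanity check detects this: at $n=2$ (Section~\ref{s:2d}) one has $|\operatorname{Comp}_0| = 4$, $|\operatorname{Comp}_1| = 2$, $|\operatorname{Comp}_2| = 4$, so the printed even formula yields $12$ rather than $10$; and at $n=3$ the printed odd formula yields $2 \cdot 198 + 2 \cdot 27 + 27 = 477$ rather than $450$. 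Note that the paper itself silently uses the corrected odd formula in Section~\ref{s:3d}, computing $|\mathcal{N}_3| = 2\,|\operatorname{Comp}_3(C_{pqr})| + 2\,|\operatorname{Comp}_1(C_{pqr})| = 450$. So your strategy is sound and complete in outline; the one refinement is that when you ``match the orbit count against the printed summation bounds'' you should conclude that the printed bounds have the parities garbled, and state the corrected ones explicitly rather than reverse-engineering coefficients to fit the display.
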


\section{Enumerating diagrams for $G=C_{pqr}$}\label{s:3d}

We will now use the results of the previous section to enumerate the number of $N_\infty$-operads for $G=C_{pqr}$. 
Using a code (which does not make use of any additional structure), we have calculated that there are 450 such, however, we will now show this using the theory as opposed to naive computational effort.  From Proposition~\ref{p:enum} we know that it is enough to compute the cardinalities of $\Comp_0(C_{pqr})$ and $\Comp_1(C_{pqr})$, and then $$|\mathcal{N}_3| = 2(|\Comp_0(C_{pqr})| + |\Comp_1(C_{pqr})|).$$

\begin{lemma} The size of $\Comp_3(C_{pqr})$ is 198. \label{lem-cc3} \end{lemma}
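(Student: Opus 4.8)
\section*{Proof proposal}

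The plan is to turn the statement $D \in \operatorname{Comp}_3(C_{pqr})$ into a finite counting problem over the nineteen potential arrows of the $3$-cube and solve it by a structured enumeration. By the Remark following the decomposition, $D \in \operatorname{Comp}_3$ exactly when $D$ contains the long diagonal $(1 \to G)$. The first step is to note that applying the restriction condition to $(1 \to G)$ and each subgroup $L$ forces all seven arrows $(1 \to L)$ into $D$; these are thus fixed, and the problem reduces to deciding which of the remaining twelve arrows are present. I would name these by rank: the six \emph{short} edges $(C_p \to C_{pq}),(C_p \to C_{pr}),(C_q \to C_{pq}),(C_q \to C_{qr}),(C_r \to C_{pr}),(C_r \to C_{qr})$, the three \emph{long} diagonals $(C_p \to G),(C_q \to G),(C_r \to G)$, and the three \emph{top} edges $(C_{pq} \to G),(C_{pr} \to G),(C_{qr} \to G)$.

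The next step is to extract precisely the constraints that restriction and transitivity impose among these twelve arrows. The key computations are: by restriction each long diagonal and each top edge forces two short edges (for instance restricting $(C_{pq} \to G)$ along $C_{pr}$ gives $(C_{pq}\cap C_{pr} \to G\cap C_{pr}) = (C_p \to C_{pr})$, and along $C_{qr}$ gives $(C_q \to C_{qr})$); and by transitivity each composite of a short edge with a top edge forces a long diagonal. Crucially, one also checks that restricting a short edge or an $(1\to L)$ yields only already-present arrows, and that \emph{no} relation holds purely among the long diagonals and top edges. Hence the six ``upper'' arrows may be chosen completely freely, and for each such choice the constraints on the short edges decouple: each short edge acquires a lower bound (it is forced present by restriction) and possibly a prohibition (it is forced absent by transitivity, namely when the relevant top edge is present but its associated diagonal is not).

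With the constraints isolated, I would count. For fixed values of the diagonals and top edges, a configuration is admissible unless some short edge is simultaneously forced present and absent, and otherwise each genuinely free short edge contributes a factor $2$. The six short edges split into three blocks, one per atom, and the block at $C_p$ depends only on the diagonal $(C_p \to G)$ and the top edges $(C_{pq}\to G),(C_{pr}\to G)$ (and symmetrically for $C_q, C_r$). Summing each block over its diagonal produces a symmetric weight $f(x,y)$ with $f(0,0)=5$, $f(0,1)=f(1,0)=2$, $f(1,1)=1$, where $x,y$ record whether the two relevant top edges are present. Writing $c_1,c_2,c_3$ for the indicators of the three top edges, the total becomes
\[
\sum_{(c_1,c_2,c_3)\in\{0,1\}^3} f(c_1,c_2)\,f(c_1,c_3)\,f(c_2,c_3) = 125 + 3\cdot 20 + 3\cdot 4 + 1 = 198.
\]

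The main obstacle is the bookkeeping of the middle step: correctly deriving \emph{every} restriction and transitivity relation among the twelve arrows — in particular the slightly counterintuitive ones, where the top edge out of $C_{pq}$ forces short edges into the \emph{other} two rank-two subgroups — and verifying that this list is complete, so that the decoupled product formula is justified. Once the constraint structure is pinned down the enumeration is routine. As a consistency check, the involution $\Phi_3$ of Proposition~\ref{p:enum} gives $|\operatorname{Comp}_0(C_{pqr})| = |\operatorname{Comp}_3(C_{pqr})| = 198$, so one could instead count diagrams with no arrow to $G$ and obtain the same value.
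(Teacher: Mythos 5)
Your proposal is correct and takes essentially the same route as the paper: both reduce to the twelve arrows in the three top facets after noting that $(1 \to G)$ forces all arrows $(1 \to H)$, both condition on which of the three arrows $(C_{ij} \to C_{pqr})$ are present, and both multiply per-facet counts of $5$, $2$, $1$ to obtain $125 + 60 + 12 + 1 = 198$. The only real difference is presentational: where the paper reads off the facet counts from the classification of two-dimensional diagrams for $C_{p_1p_2}$, you re-derive them (and the decoupling of the three facets) directly from the restriction and transitivity axioms, making explicit a compatibility check the paper leaves implicit.
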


\begin{proof} 
Recall that $\Comp_3(C_{pqr})$ consists of all \Ninf-diagrams containing the arrow $(1 \rightarrow C_{pqr})$ and therefore, by restriction, all arrows $(1 \rightarrow H)$ for all $H < C_{pqr}$.
Therefore, we must count the possibilities of filling in the three two-dimensional facets containing $C_{pqr}$ in a manner that gives an \Ninf-diagram.
We distinguish the different cases according to how many ``edge'' arrows $(C_{ij} \rightarrow C_{pqr})$ there are in an \Ninf-diagram before considering the possibilities for the three top facets. We will then view the \Ninf-diagram on a top facet as an \Ninf-diagram for the two-dimensional case with top vertex $C_{pq}$, as we will also see in the figures that follow.

\begin{figure}[h]
\begin{tikzpicture}[-, node distance=2cm, auto,scale = 0.75, baseline = (qr)]
\coordinate (pqr) at (0.0,0.0);
\coordinate (pq) at (1.73,-1.0);
\coordinate (pr) at (-1.73,-1.0);
\coordinate (qr) at (0,2.0);
\coordinate (p1) at (0,-2.0);
\coordinate (q1) at (1.73,1);
\coordinate (r1) at (-1.73,1);
\node at (pqr) [above right = 0.5mm of pqr] {$pqr$}; 
\node at (p1) [below = 0.5mm of p1] {$p$};
\node at (q1) [above right = 0.5mm of q1] {$q$};
\node at (r1) [above left = 0.5mm of r1] {$r$};
\node at (pq) [below right = 0.5mm of pq] {$pq$};
\node at (pr) [below left = 0.5mm of pr] {$pr$};
\node at (qr) [above = 0.5mm of qr] {$qr$};
\draw[dashed] (p1)--(pq)--(q1)--(qr)--(r1)--(pr)--(p1);
\draw[dashed] (pqr) -- (p1);
\draw[dashed] (pqr) -- (q1);
\draw[dashed] (pqr) -- (r1);
\draw (pqr) -- (pq);
\draw (pqr) -- (pr);
\draw (pqr) -- (qr);
\end{tikzpicture}
\caption{ The three facets of the three dimensional cube containing $C_{pqr}$. The arrows $(C_{ij} \rightarrow C_{pqr})$, which are each shared by two facets, are indicated with solid lines.  }
\end{figure}
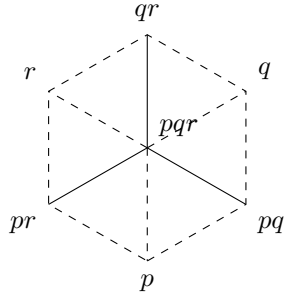


\textit{Case 1}. There are no arrows connecting to the vertex $C_{pqr}$ from a $C_{ij}$. This restricts the \Ninf-diagrams that could occur on the three top facets to those which do not contain the top arrows $(C_p \rightarrow C_{pq})$ or $(C_q \rightarrow C_{pq})$ when considered as \Ninf-diagrams for the two-dimensional case. There are five such \Ninf-diagrams (Fig.~\ref{fig-c3-hexb}.) and three faces to fill, thus $5^3 = 125$ remaining options.

\begin{figure}[h]
\begin{tikzpicture}[->, node distance=2cm, auto,scale = 0.6, baseline = (11)]
\node (11) at (0.0,2.0) {$1$};
\node (p1) at (0.0,0.0) {$p$};
\node (q1) at (2.0,2.0) {$q$};
\node (pq1) at (2.0,0.0) {$pq$};

\node (12) at (4.0,2.0) {$1$};
\node (p2) at (4.0,0.0) {$p$};
\node (q2) at (6.0,2.0) {$q$};
\node (pq2) at (6.0,0.0) {$pq$};
\draw (12) -- (p2);

\node (13) at (0.0,-2.0) {$1$};
\node (p3) at (0.0,-4.0) {$p$};
\node (q3) at (2.0,-2.0) {$q$};
\node (pq3) at (2.0,-4.0) {$pq$};
\draw (13) -- (q3);

\node (14) at (4.0,-2.0) {$1$};
\node (p4) at (4.0,-4.0) {$p$};
\node (q4) at (6.0,-2.0) {$q$};
\node (pq4) at (6.0,-4.0) {$pq$};
\draw (14) -- (p4);
\draw (14) -- (q4);

\node (15) at (8.0,-2.0) {$1$};
\node (p5) at (8.0,-4.0) {$p$};
\node (q5) at (10.0,-2.0) {$q$};
\node (pq5) at (10.0,-4.0) {$pq$};
\draw (15) -- (p5);
\draw (15) -- (q5);
\draw (15) -- (pq5);
\end{tikzpicture}
\caption{ The two dimensional \Ninf-diagrams which can occur in a facet which contains no top arrows. }
\label{fig-c3-hexb}
\end{figure}
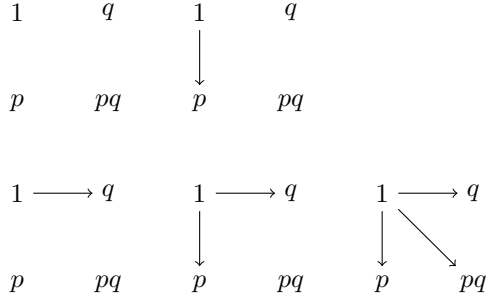

\textit{Case 2}. There is one arrow $(C_{ij} \rightarrow C_{pqr})$. Thus, one of the three top facets contains no top arrows $(C_p \rightarrow C_{pq})$ or $(C_q \rightarrow C_{pq})$ while the other two contain one arrow of those two. There are two \Ninf-diagrams of $C_{pq}$ satisfying the latter condition (Fig.~\ref{fig-c3-hexc}.), and there is a three-fold rotational symmetry, thus we have $3 \cdot 5 \cdot 2^2 = 60$ remaining options.

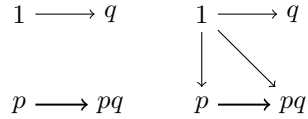
\begin{figure}[h]
\begin{tikzpicture}[->, node distance=2cm, auto,scale = 0.6, baseline = (11)]
\node (11) at (0.0,2.0) {$1$};
\node (p1) at (0.0,0.0) {$p$};
\node (q1) at (2.0,2.0) {$q$};
\node (pq1) at (2.0,0.0) {$pq$};
\draw[line width = 0.75pt] (p1) -- (pq1);
\draw (11) -- (q1);

\node (12) at (4.0,2.0) {$1$};
\node (p2) at (4.0,0.0) {$p$};
\node (q2) at (6.0,2.0) {$q$};
\node (pq2) at (6.0,0.0) {$pq$};
\draw[line width = 0.75pt] (p2) -- (pq2);
\draw (12) -- (q2);
\draw (12) -- (p2);
\draw (12) -- (pq2);

\end{tikzpicture}
\caption{ The two dimensional \Ninf-diagrams which can occur in a facet which contains one top arrow (thicker line). }
\label{fig-c3-hexc}
\end{figure}
   
\textit{Case 3}. There are two arrows of the form $(C_{ij} \rightarrow C_{pqr})$ connecting to the vertex $C_{pqr}$. Thus, one of the three top facets contains both $(C_p \rightarrow C_{pq})$ and $(C_q \rightarrow C_{pq})$ while the other two facets contain one such arrow. There is only one \Ninf-operad of $C_{pq}$ satisfying the former condition (Fig.~\ref{fig-c3-hexd}.), and there is a three-fold rotational symmetry, thus we have $3 \cdot 2^2 = 12$ remaining options.

\begin{figure}[h]
\begin{tikzpicture}[->, node distance=2cm, auto,scale = 0.6, baseline = (11)]
\node (11) at (0.0,2.0) {$1$};
\node (p1) at (0.0,0.0) {$p$};
\node (q1) at (2.0,2.0) {$q$};
\node (pq1) at (2.0,0.0) {$pq$};
\draw[line width = 0.75pt] (p1) -- (pq1);
\draw[line width = 0.75pt] (q1) -- (pq1);
\draw (11) -- (p1);
\draw (11) -- (q1);
\draw (11) -- (pq1);

\end{tikzpicture}
\caption{ The two-dimensional \Ninf-diagrams which can occur in a facet which contains both top arrows (thicker lines). }
\label{fig-c3-hexd}
\end{figure}
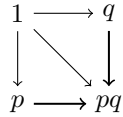

\textit{Case 4}. All three arrows connecting to the vertex $C_{pqr}$ are present. Thus all three top facets must contain two arrows $(C_p \rightarrow C_{pq})$ and $(C_q \rightarrow C_{pq})$, and so there is only one remaining option.

These cases are disjoint and account for all possible \Ninf-diagrams, and the possibilities contained therein sum to 198. \end{proof}

\begin{lemma} The size of $\Comp_1(C_{pqr})$ is 27. \label{lem-27} \end{lemma}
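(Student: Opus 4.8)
The plan is to stratify $\Comp_1(C_{pqr})$ by the subgroup $G^0$ and to reduce to counting two independent two-dimensional pictures. By definition a diagram $D \in \Comp_1(C_{pqr})$ has $G^0$ of order equal to a product of two primes, so $G^0 \in \{C_{pq}, C_{pr}, C_{qr}\}$. Since $G^0$ is determined by $D$, these three possibilities partition $\Comp_1(C_{pqr})$, and the evident $S_3$-action permuting the primes $p,q,r$ (a poset automorphism of the cube, hence a bijection of $\D_3$ preserving the $N_\infty$-conditions) carries the three parts onto one another. It therefore suffices to count the diagrams with $G^0 = C_{pq}$ and multiply by $3$; I expect to obtain $9$ such diagrams, giving $3 \cdot 9 = 27$.

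Fix $G^0 = C_{pq}$. First I would note that the \emph{only} arrow with target $G$ is $(C_{pq} \to G)$: this arrow lies in $D^0$ by (the argument of) Proposition~\ref{p:decomp}, and any further arrow $(H \to G)$ would force $G^0 \subseteq C_{pq} \cap H$, so minimality of $G^0 = C_{pq}$ forces $H \supseteq C_{pq}$, whence $H = C_{pq}$. Proposition~\ref{p:decomp} (with $n=3$, $d=1$) then confines every remaining arrow of $D$ to the two bottom facets not meeting $D^0$, namely $B_p = \{1, C_q, C_r, C_{qr}\}$ and $B_q = \{1, C_p, C_r, C_{pr}\}$, which share only the edge $(1,C_r)$. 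Restricting $(C_{pq} \to G)$ along $C_{qr}$, $C_{pr}$, and $C_r$ forces the top arrows $(C_q \to C_{qr})$ in $B_p$, $(C_p \to C_{pr})$ in $B_q$, and the shared arrow $(1 \to C_r)$.

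The crux is the gluing claim: a choice of graph on $B_p$ and a choice on $B_q$ assemble, together with $(C_{pq}\to G)$, into a valid $N_\infty$-diagram with $G^0 = C_{pq}$ if and only if each is a valid two-dimensional $N_\infty$-diagram carrying its forced top arrow, with \emph{no} extra compatibility required. The verification I would give rests on the observation that $B_p$ (resp.\ $B_q$) is exactly the subgroup lattice of $C_{qr}$ (resp.\ $C_{pr}$): hence for any arrow $(H \to K)$ of $B_p$ and any $L \le G$ one has $H \cap L \le H \le C_{qr}$, so the restricted arrow stays inside $B_p$ and is governed by the two-dimensional conditions there; transitivity cannot produce a genuinely new cross-facet arrow, since the only shared vertices are $1$ and $C_r$ and any composite routed through them already lives in a single facet; and no arrow with target $C_{pq}$ or $G$ (other than $(C_{pq}\to G)$) is ever forced, so $G^0 = C_{pq}$ is preserved. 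The two facet-diagrams automatically agree on the shared edge because both are forced to contain $(1 \to C_r)$.

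Finally I would count: by the classification of the ten two-dimensional diagrams, the number of $N_\infty$-diagrams on a facet containing a \emph{prescribed} top arrow is exactly $3$ — the two diagrams of Figure~\ref{fig-c3-hexc} carrying precisely that top arrow, plus the single diagram of Figure~\ref{fig-c3-hexd} carrying both top arrows. Thus there are $3$ choices on $B_p$ and $3$ on $B_q$, yielding $9$ diagrams with $G^0 = C_{pq}$ and $3 \cdot 9 = 27$ in total. The main obstacle is the independence step of the previous paragraph; the point that dissolves it is that each facet is the full subgroup lattice of its top vertex, so restriction is internal to a facet and transitivity cannot escape through the shared edge $(1,C_r)$.
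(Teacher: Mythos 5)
Your proposal is correct and follows essentially the same route as the paper: reduce by the prime-permuting symmetry to the case where the unique arrow into $G$ is $(C_{pq}\to G)$, use the forced restrictions and Proposition~\ref{p:decomp} to confine all remaining arrows to the two bottom facets, and count admissible two-dimensional diagrams on each facet via the classification of the ten diagrams for $C_{p_1p_2}$. Your multiplicative count $3\times 3=9$ per choice of $G^0$ is the same enumeration as the paper's case analysis $1+4+4=9$ (organized there by how many facets contain their diagonal), with your gluing/independence argument simply making explicit a step the paper leaves implicit.
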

 
 \begin{proof}
Without loss of generality, the only arrow adjacent to $C_{pqr}$ is $(C_{pq} \rightarrow C_{pqr})$. By restriction, our \Ninf-diagram therefore also contains the parallel edges $(C_q \rightarrow C_{qr})$, $(C_p \rightarrow C_{pr})$ and $(C_1 \rightarrow C_r)$. 

The only other arrows that could occur are in the facets containing $1$ and $C_{pr}$ and $1$ and $C_{qr}$. We distinguish the possible cases according to how many of those facets contain a diagonal arrow $(C_1 \rightarrow C_{pr})$ or $(C_1 \rightarrow C_{qr})$. 

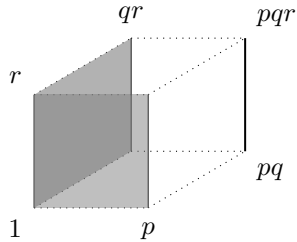
\begin{figure}[h]
\begin{tikzpicture}[-, node distance=2cm, auto,scale = 0.75, baseline = (pqr)]
\coordinate (11) at (0.0,0.0);
\coordinate (p1) at (2.0,0.0);
\coordinate (q1) at (1.7,1.0);
\coordinate (r1) at (0.0,2.0);
\coordinate (pq) at (3.7,1.0);
\coordinate (pr) at (2.0,2.0);
\coordinate (qr) at (1.7,3.0);
\coordinate (pqr) at (3.7,3.0);
\node at (11) [below left = 0.5mm of 11] {$1$}; 
\node at (p1) [below  = 0.75mm of p1] {$p$};
\node at (r1) [above left = 0.5mm of r1] {$r$};
\node at (pq) [below right = 0.5mm of pq] {$pq$};
\node at (qr) [above = 0.75mm of qr] {$qr$};
\node at (pqr) [above right = 0.5mm of pqr] {$pqr$};
\draw[dotted] (11) to (p1);
\draw[dotted] (11) to (q1); %
\draw[] (11) to (r1);
\draw[dotted] (p1) to (pq);
\draw[] (p1) to (pr);
\draw[dotted] (q1) to (pq); %
\draw[] (q1) to (qr); %
\draw[dotted] (r1) to (pr);
\draw[dotted] (r1) to (qr);
\draw[line width = 0.75] (pq) -- (pqr);
\draw[dotted] (pr) -- (pqr);
\draw[dotted] (qr) -- (pqr); 
\fill[gray, opacity=0.5] (11) -- (p1) -- (pr) -- (r1);
\fill[gray, opacity=0.6] (11) -- (r1) -- (qr) -- (q1);
\end{tikzpicture}
\caption{ The arrows induced by restriction (solid lines) if $(C_{pr}\rightarrow C_{pqr})$ is present (thicker line). An \Ninf-diagram containing $(C_{pr}\rightarrow C_{pqr})$ in $\Comp_1(C_{pqr})$can only include additional arrows in the gray facets. }
\end{figure}

\textit{Case 1}. None of the two facets contain a diagonal. There is only one such case.

\textit{Case 2}. One facet contains a diagonal, and the other one does not. The one that does not contain a diagonal can therefore contain no further arrows, whereas the other one one of the two possible diagrams in Figure \ref{figurexxx}. This therefore accounts for four possibilities. 

\textit{Case 3}. Both facets contain their diagonal. Therefore, each of them is one of the two diagrams in Figure \ref{figurexxx}. This gives us four possibilities. 

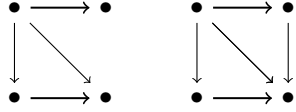
\begin{figure}[h]
\begin{tikzpicture}[->, node distance=2cm, auto,scale = 0.6, baseline = (11)]
\node (11) at (0.0,2.0) {\textbullet};
\node (p1) at (0.0,0.0) {\textbullet};
\node (q1) at (2.0,2.0) {\textbullet};
\node (pq1) at (2.0,0.0) {\textbullet};
\draw (11) -- (pq1);
\draw[line width = 0.75] (11) -- (q1);
\draw (11) -- (p1);
\draw[line width = 0.75] (p1) -- (pq1);

\node (12) at (4.0,2.0) {\textbullet};
\node (p2) at (4.0,0.0) {\textbullet};
\node (q2) at (6.0,2.0) {\textbullet};
\node (pq2) at (6.0,0.0) {\textbullet};
\draw (12) -- (pq2);
\draw[line width = 0.75] (12) -- (q2);
\draw (12) -- (p2);
\draw (12) -- (pq2);
\draw[line width = 0.75pt] (p2) -- (pq2);
\draw (q2) -- (pq2);

\end{tikzpicture}
\caption{ The possible forms of an \Ninf-diagram containing the diagonal and two parallel arrows (thicker lines). }\label{figurexxx}
\end{figure}

In all, we have nine possibilities for \Ninf-diagrams in $\Comp_1(C_{pqr})$ containing the arrow $(C_{pq} \rightarrow C_{pqr})$, so also nine for $(C_{pr} \rightarrow C_{pqr})$ and nine for $(C_{qr} \rightarrow C_{pqr})$.
 \end{proof}
 
 \begin{corollary}
 $|\mathcal{N}_3| = 198 \times 2 + 27 \times 2 = 450$.
 \end{corollary}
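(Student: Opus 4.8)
The plan is to assemble the final count purely from the three structural ingredients already established: the disjoint decomposition of $\mathcal{N}_3$, the cardinality-preserving involution $\Phi_3$, and the two explicit counts proved in Lemmas~\ref{lem-cc3} and~\ref{lem-27}. No new combinatorics is required—the corollary is an arithmetic assembly.

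First I would recall the decomposition from Section~\ref{s:3d}'s parent section, namely $\mathcal{N}_3 = \bigsqcup_{d=0}^{3} \Comp_d(C_{pqr})$, which partitions the set of $N_\infty$-diagrams for $G=C_{pqr}$ into four pairwise-disjoint, exhaustive subsets indexed by $d=0,1,2,3$. Because the union is disjoint, cardinalities simply add:
\[
|\mathcal{N}_3| = \sum_{d=0}^{3} |\Comp_d(C_{pqr})|.
\]

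Next I would invoke Proposition~\ref{p:enum}(3), which asserts $\Phi_3(\Comp_d(C_{pqr})) = \Comp_{3-d}(C_{pqr})$ and hence, since $\Phi_3$ is a bijection, $|\Comp_d(C_{pqr})| = |\Comp_{3-d}(C_{pqr})|$. For $n=3$ this pairs the four subsets as $(\Comp_0,\Comp_3)$ and $(\Comp_1,\Comp_2)$, collapsing the sum to $|\mathcal{N}_3| = 2\bigl(|\Comp_0(C_{pqr})| + |\Comp_1(C_{pqr})|\bigr)$, exactly the identity already flagged in the prose preceding the lemmas. I would then substitute the two computed values: Lemma~\ref{lem-cc3} gives $|\Comp_3(C_{pqr})|=198$, so $|\Comp_0(C_{pqr})|=198$ by the involution, and Lemma~\ref{lem-27} gives $|\Comp_1(C_{pqr})|=27$, so $|\Comp_2(C_{pqr})|=27$. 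Summing the four terms yields $198+27+27+198 = 198\times 2 + 27 \times 2 = 450$.

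There is essentially no obstacle here, since all the difficulty lives in the preceding lemmas and in the well-definedness and involutivity of $\Phi_3$; the only points needing care are confirming that the decomposition is a genuine partition (so that cardinalities may be added rather than merely bounded) and correctly matching each $\Comp_d$ with its involution-image $\Comp_{3-d}$. The agreement with the independent computational total of $450$ serves as a final sanity check that the pairing and the two lemma counts have been combined correctly.
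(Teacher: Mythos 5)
Your proposal is correct and takes essentially the same route as the paper: the paper's one-line proof cites Corollary~\ref{cor:res} (itself an immediate consequence of the decomposition and Proposition~\ref{p:enum}) and substitutes the counts from Lemmas~\ref{lem-cc3} and~\ref{lem-27}, which is precisely the pairing $|\Comp_0(C_{pqr})|=|\Comp_3(C_{pqr})|=198$ and $|\Comp_1(C_{pqr})|=|\Comp_2(C_{pqr})|=27$ that you spell out. If anything, your explicit unwinding is slightly more careful, since the even/odd case split as printed in Corollary~\ref{cor:res} is garbled (for $n=3$ the four pieces pair off with no middle term, exactly as in your sum), whereas the paper's proof of the corollary silently uses the correct pairing anyway.
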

 
 \begin{proof}
 By Corollary \ref{cor:res}, we have $|\mathcal{N}_3|= |\Comp_3(C_{pqr})| \times 2 + |\Comp_1(C_{pqr})| \times 2$.
 \end{proof}
 
 \begin{remark}
We finish this paper by the consideration for $\mathcal{N}_n$ for $n >3$. Although we have presented a way of decomposing the problem into enumerating $\lceil n/2 \rceil$ disjoint pieces, the way forward is still not clear. Indeed, the reasoning to get the values of 198 and 27 in the $n=3$ case required studying in depth the cases appearing for $n=2$. Therefore even for $n=4$, one would have to be able to analyse the 450 options for $n=3$ on a case-by-case basis.

As such, the results appearing in this paper should be seen as a structural result as opposed to an algorithm for computation.
 \end{remark}


\bibliographystyle{amsalpha}
\bibliography{literatur}

\end{document}